\numberwithin{equation}{section}
\renewcommand\section{\@startsection{section}{1}%
		\z@{1.5\linespacing\@plus.5\linespacing}{1\linespacing}%
		{\large\normalfont\scshape\centering}}
\newcommand{\EE}{\ensuremath{\mathbb{E}}}
\newcommand{\PP}{\ensuremath{\mathbb{P}}}
\newcommand{\R}{\ensuremath{\mathbb{R}}}
\newcommand{\C}{\ensuremath{\mathbb{C}}}
\newcommand{\Z}{\ensuremath{\mathbb{Z}}}
\newcommand{\I}{\ensuremath{\mathbf{i}}}
\renewcommand{\rho}{\varrho}
\newcommand{\eps}{\varepsilon}
\renewcommand{\leq}{\leqslant}
\renewcommand{\geq}{\geqslant}
\newcommand{\Tr}{\mathrm{Tr}}
\newcommand{\tr}{\mathrm{tr}}
\newcommand{\Kclassical}{\mathbf{K}}
\newtheorem{theorem}{Theorem}[section]
\newtheorem{claim}[theorem]{Claim}
\newtheorem{assumption}{Assumptions}
\theoremstyle{definition}
\newtheorem{remark}[theorem]{Remark}
\theoremstyle{definition}
\newtheorem{example}[theorem]{Example}
\theoremstyle{definition}
\theoremstyle{definition}
\begin{document}	
	
	\title{Introduction to quantum exclusion processes}
	%\date {\today}
	
	\author[G.~Barraquand]{Guillaume Barraquand}
	\address{G. Barraquand, Laboratoire de Physique de l'Ecole Normale Supérieure, Ecole Normale Supérieure, PSL University, CNRS, Sorbonne Université, Université Paris-Cité, 24 rue Lhomond, 75005 PARIS}
	\email{guillaume.barraquand@ens.fr} 
	\author[D.~Bernard]{Denis Bernard}
	\address{D. Bernard, Laboratoire de Physique de l'Ecole Normale Supérieure, Ecole Normale Supérieure, PSL University, CNRS, Sorbonne Université, Université Paris-Cité, 24 rue Lhomond, 75005 PARIS.}
	\email{denis.bernard@ens.fr}
	
	\begin{abstract}
The QSSEP, short for quantum symmetric simple exclusion process, is a paradigm model for stochastic quantum  dynamics. Averaging over the noise, the quantum dynamics reduce to the well-studied SSEP (symmetric simple exclusion process). These  notes provide an introduction to quantum exclusion processes, focusing on the example of QSSEP and its connection to free probability, with an emphasis on mathematical aspects. 
	\end{abstract}
	
	\maketitle
	\setcounter{tocdepth}{1}
	\tableofcontents
	
	\section{Introduction} \label{sec:Introduction}

    	The initial motivation for introducing quantum exclusion processes was to develop a quantum extension of the macroscopic fluctuation theory (MFT), which is a general framework to deal with diffusive system driven out of equilibrium introduced in \cite{bertini2001fluctuations, bertini2005current}. Beyond this initial motivation, quantum exclusion processes are a good playground to extend the tools used in the large scale analysis of classical stochastic systems. In these notes, we  mostly discuss the simplest model, the  Quantum Simple Symmetric Exclusion Process (QSSEP). The QSSEP is a noisy and quantum analogue of the Symmetric Simple Exclusion Process (SSEP). The average over the noise of QSSEP reduces to its classical counterpart,  the  SSEP.  As we will see, its analysis displays clear signs of integrability which deserve to be better understood, and is closely connected to the theory of random matrices and  free probability \cite{bernard2019open,biane2021combinatorics,hruza2022coherent}. Actually, free probability also appears in other studies of quantum systems, notably in the context of the eigenstate thermalization hypothesis \cite{pappalardi2022eigenstate}.

	\subsection{Towards a quantum extension of Macroscopic Fluctuation Theory}
	Let us elaborate on the initial motivation of finding a quantum extension to MFT \cite{BauerBernardJin17}. Consider a stochastic particle system on a lattice, let's say a Markovian exclusion process  on $N$ sites between two reservoirs at densities $n_a\neq n_b$. At large scale, the system is often  described by a density function $n(x, s)$ where the variables $x\in [0,1]$ and $s>0$ denote a macroscopic position $x$ and rescaled time $s$: $n(x,s)$ is the average number of particles per site near position $\lfloor xN\rfloor$ at time $s N^2$. Such hydrodynamic limit is  generally based on the fact that locally, the system converges to a stationary measure corresponding to a certain density, which may however vary at large scale. This can be completely formalized mathematically \cite{kipnis1998scaling}. If $j(x,s)$ denotes the average flux near the macroscopic position $x$, the conservation of particles imposes that  
	\begin{equation}
		\partial_s n(x,s) +\partial_x j(x,s) =0.
		\label{eq:conservation}
	\end{equation} 
	Macroscopic fluctuation theory allows to predict large deviation principles for various quantities related to transport in  diffusive systems. The main idea is to introduce a noisy version of the Fourier-Fick's law
	\begin{equation}
	 j(x,s) = -D(n(x,s))\partial_x n(x,s) + \sqrt{\frac{1}{N}} \sqrt{\sigma(n(x,s))}\, \xi(x,s),
	\label{eq:weaknoise}
	\end{equation}
	 where $\xi$ is a space-time white noise, which induces a probability measure on density and current profiles. 
	  Here $D(n)$ is the diffusion constant and $\sigma(n)$  is called the mobility, we do not define them.
	 Since the noise variance is of order $\frac{1}{N}\ll 1$, density and current fluctuations are small and rare. This framework allows to find a functional $\mathcal I$, called rate function, sometimes completely explicit,  such that under the stationary measure of the process,  
	$$  \mathbf P\left[  (n(x))_{x\in [0,1]} \approx f\right] \approx e^{-N \mathcal I(f)},$$
to be understood using the formalism  of large deviation theory --- we refer to  \cite{bertini2014macroscopic} for a review on MFT. From the mathematical point of view, the validity of MFT in general is only conjectural, but its predictions have been rigorously verified  on many specific models (including SSEP). 

	A quantum extension of MFT  for diffusive quantum systems   should describe: 
	\begin{enumerate}%[leftmargin=2em]
		\item transport and their fluctuations;
		\item quantum effects (interferences, coherences) and their fluctuations.
	\end{enumerate}
We refer to the review  \cite{bernard2021can} for further physical background. The first task is to define quantum extensions of interacting particle systems. In these notes we focus on exclusion type systems, and often restrict to the guiding example of the symmetric simple exclusion process. Before defining it in Section \ref{sec:QSSEP}, we need to explain what is the state space in the quantum setting,  and how dynamics are defined. 

\subsection{The state space for quantum systems}
The particle configuration state space $\mathcal C$ of the classical system becomes a $*$-algebra $\mathcal A$. The reader unfamiliar with this formalism  may  think of $\mathcal A$ as a set of linear operators, acting on some Hilbert space, and their adjoints. A probability measure on $\mathcal C$ becomes a \emph{state} on $\mathcal A$, that is a positive unital $*$-homomorphism $\nu: \mathcal A\to \mathbb C$, i.e. an algebra homomorphism satisfying $\nu(a^*) =\overline{\nu(a)}$, $\nu(1)=1$, $\nu(a^*a)>0$. Typically, the algebra $\mathcal A$ is the Von Neumann algebra  $\mathcal A=\mathcal B(\mathcal H)$  of bounded linear operators on some Hilbert space  $\mathcal H$. When $\dim \mathcal H$ is finite, states may be encoded by a density matrix $\rho$ (positive definite Hermitian) such that 
	$$ \nu(a) = \mathrm{Tr}(\rho a).$$
	
\begin{example} For the SSEP on $N$ sites, we denote configurations by   $\mathfrak{n}=(n_1, \dots, n_N) \in \mathcal C=\lbrace 0,1\rbrace^N$. Let $\mathcal H$ be a Hilbert space with basis $(\ket{\mathfrak{n}})_{\mathfrak{n}\in \mathcal C}$ indexed by classical configurations.  Here we use Dirac's notation: a so-called ket $\ket{u}$ denotes a vector in some Hilbert space. A braket $\braket{v\vert u}$ is a complex number, the scaler product of $\ket u$ and $\ket v$. Hence we may view the bra $\bra v$ as a linear form $\mathcal H\to\mathbb C$. When $\mathcal H$ is finite dimensional, one should think of $\bra v$ as being the  Hermitian transpose of the conjugate of $\ket v$, i.e. a row vector. Finally,  $\ket u \bra v$ is a rank one linear operator on the Hilbert space.  Since $\dim(\mathcal H) = 2^{N}$,  we can focus on the $2^N\times 2^N$  density matrix $\rho$. Note that if $\rho$ is diagonal in the basis $(\ket{\mathfrak{n}})_{\mathfrak n\in \mathcal C}$, it can be written as 
$$\rho = \sum_{\mathfrak{n}\in \mathcal{C}}  \mathbf{P}(\mathfrak{n}) \ket{\mathfrak{n}}\bra{\mathfrak{n}}, $$
where $\mathbf P(\mathfrak{n})$ is a probability distribution. Indeed, the condition $\nu(1)=1$ is equivalent to $\mathrm{Tr}(\rho)=1$, that is $\sum_{\mathfrak{n}\in \mathcal C} \mathbf P(\mathfrak{n}) =1$, and positivity implies $0\leq \mathbf P(\mathfrak{n})\leq 1$.
\end{example}
\subsection{Stochastic quantum dynamics} 
We also need to define dynamics on states, instead of Markovian dynamics on probability measures.  In general, quantum dynamics on quantum states can either be unitary flows for closed systems, or flows of so-called trace preserving completely positive maps (also called CP-maps or quantum channels) for open systems, since then the dynamics are in general not unitary to take into account additional dissipative processes related to the contacts with external reservoirs.
Here, these dynamics will be themselves random, hence we work on a probability space $(\Omega, \mathcal F, \PP)$.  We wish to define a random state  $\nu_t^\omega$ for each $\omega\in \Omega$ (or a random density matrix  $\rho_t^\omega$). 
 The dynamics are constructed using random unitary flows for closed systems, and flows of random CP-maps for open systems.

 We moreover impose that our   quantum stochastic dynamics
should be related to the classical one, in the sense that the average quantum dynamics 
$$ t \to \overline{\rho_t} := \mathbb E\left[ \rho_t\right] $$
should contain, as a marginal, the classical dynamics.

 We emphasize that there are two levels of randomness when dealing with quantum stochastic dynamics:
	\begin{itemize}[leftmargin=1em]
		\item Quantum randomness,  encoded by $\nu$ (or $\rho$). The quantum expectation value is $\nu(a)$. 
		\item Classical randomness, coming from the fact that $\nu$ depends on $\omega\in\Omega$, associated to the probability measure $\PP$ and expectation $\EE$ -- note that we use a different notation than for the probability measure $\mathbf P$ and expectation $\mathbf E$ associated to the classical SSEP. 
	\end{itemize}
	Thus we should consider quantities such as 
	$$  \EE\left[\nu_t(a)\right], \;\;\EE\left[\nu_t(a_1)\nu_t(a_2)\right], \cdots, \;\;\EE\left[\nu_t(a_1)\dots \nu_t(a_n)\right].$$ 
	 Furthermore, it may also be interesting to see how the dynamics may be influenced by a sequence of quantum measurements, but we will not discuss this here. 

\begin{remark}
From a physics perspective, the origin of randomness may be understood from two different perspectives:
\begin{itemize}[leftmargin=1em]
\item Either as a way to describe  generic systems;
\item Or as arising from interaction with hidden degrees of freedom.  
\end{itemize}
The later case is particularly adapted to describe diffusive systems. Indeed a physical particle system behaves diffusively only above a certain scale, the so-called \emph{mean free path} which is the distance separating two collisions of the system's particle on obstacles (think about the passage from random walks to the Brownian motion).  Thus, if we are aiming at describing diffusive systems, there should be low space-time scale degrees of freedom inducing these many collisions. Furthermore, after averaging over those degrees of freedom, the systems is expected to behave classically -- in the physics literature, this phenomena of destructive interference induced by collisions is called decoherence. This echoes in the above condition that the average dynamics should reduce to the classical one.
\end{remark}
	
\begin{remark} For the quantum SSEP, the density matrix $\rho_t$ is a $2^N\times 2^N$ matrix. As we have mentioned, when it is diagonal, it describes a probability measure on configurations, but in the quantum setting we may also look at combinations of classical configurations such as 
	$$ \ket{010011100}+\ket{010001100} ,$$
and study interferences. Hence the off-diagonal nature of $\rho_t$ is important and codes for quantum effects. 
\end{remark}
	
	\subsection{Quantum exclusion processes}
In the case of quantum exclusion processes on $N$ sites with periodic boundary conditions, the system is closed and the dynamics are unitary, in the sense that 
	$$	 \rho_t = U_t \rho_0 U_t ^* ,$$
	where $U_t=U_t(\omega)$, $\omega\in\Omega$, is a unitary matrix described by a stochastic differential equation. We consider It\={o} type SDEs of the form
	$$ \rho_{t+dt} =e^{-\I dH_t} \rho_t e^{\I dH_t},$$
	for some hermitian matrix Hamiltonian   increments $dH_t$. By definition, the average over the noise  $\overline{\rho_t}$ is 
	$$ \overline{\rho_t}  = \int_{\Omega} d\PP(\omega) U_t(\omega) \rho_0 U_t(\omega)^* =: \overline \Phi_t(\rho_0).$$
This is a convex combination of unitary transformations, but it is not unitary in general.  The map $\rho\mapsto \overline \Phi_t(\rho)$ is an example of a (special class of) trace preserving completely positive map, 
i.e. $\Tr(\overline \Phi(\rho))=1$ for $\Tr(\rho)=1$, and $\overline \Phi(\rho)\geq 0$ for $\rho\geq 0$. Technically, the definition of CP-maps involves a stronger notion than the simple positivity, but we don't need to enter into these details --  see \cite[chapter 6]{attal2025+lectures} for the more complete definition. 

In the case of open quantum exclusion processes, the above unitary flows are replaced by flows of CP-maps $\rho \to \rho_t=\Phi_t(\rho_0)$, with $\Phi_t=\Phi_t(\omega)$, $\omega\in\Omega$. The convex combination of CP-maps is again a CP-map so that the average flow is also a CP-map.

As already mentioned, we require that the semi group $\overline \Phi_t$ encodes the classical dynamics. This means that  $\overline \Phi_t$ should preserve the class of diagonal matrices (in the basis $(\ket{\mathfrak{n}})_{\mathfrak n\in \mathcal C}$) and the associated flow on diagonal matrices should be the same as the evolution of probability distributions $\mathbf P_t(\mathfrak n)$ in the classical exclusion process. 

For instance, in the special case of the SSEP, it implies that 
\begin{equation}
	\mathbf E_{\rm ssep}\left[ e^{\sum_{j=1}^N h_j n_j(t)} \right] = \mathbb E\left[ \Tr\left(  \rho_t e^{\sum_{j=1}^N h_j \hat n_j} \right) \right],  
	\label{eq:functionalQSSEPSSEP}
\end{equation} 
	where $\hat n_j$ denotes the number operator, i.e. the diagonal matrix with diagonal coefficients $0$ or $1$ (in the basis $(\ket{\mathfrak{n}})_{\mathfrak n\in \mathcal C}$) depending on the state of the $j$th site.	 
	
	\subsection{A curiosity about the classical SSEP}
	As $t$ goes to infinity, the average density matrix $\overline{\rho_t}$ must converge to the unique stationary measure of the SSEP. Under this stationary measure, it is well-known that, 
	$$ 
	\mathbf E_{\rm ssep}\left[ e^{\sum_{j=1}^N h_j n_j} \right]  \asymp_{N\to\infty} e^{N \mathcal F_\mathrm{ssep}(h)},
	$$ 
	in the sense that the logarithm of both sides are asymptotically equivalent,  
	where the function $h$ is a  continuum limit of $h_j$ in a suitable sense  (say $h_j=h(j/N)$ for some smooth function $h$). The statement was first derived in  \cite{derrida2001free}, and later with various levels of mathematical rigor in \cite{derrida2001large, bertini2003large, bertini2007stochastic, bodineau2022large}. Via \eqref{eq:functionalQSSEPSSEP}, the analysis of  QSSEP   led \cite{bauer2022bernoulli}  to a new expression for the SSEP large deviation rate function  (here we consider the system with boundary densities $n_a=0, n_b=1$):
	\begin{equation}
		\mathcal F_\mathrm{ssep}(h) = \sup_{a, b}\left\lbrace \int_0^1 dx \left[\log\left( 1+ b(x)(e^{h(x)}-1)-a(x)b(x)\right)\right] +\tilde{ \mathcal F}_0(a) \right\rbrace,
		 \label{eq:variationalpb}
	\end{equation}
	where 
	$$ \tilde{\mathcal F}_0(a) = \sum_{k=1}^{\infty} \frac{(-1)^{k+1}}{k} \kappa_k\left( \int_x^1 \mathrm dy\, a(y) \right),$$
	and $\kappa_k$ denotes the $k$th free cumulant defined in Section \ref{sec:cumulants}. The map  $x\mapsto \int_x^1 \mathrm dy\, a(y)$ should be viewed as a random variable on the space $[0,1]$ equipped with the Lebesgue measure.  The variational problem in \eqref{eq:variationalpb} does not involve any boundary condition on $a,b$ and should be understood in a sense explained below in Section \ref{sec:structured}. 

\begin{remark}	\label{remark:classicality}
	Further, the analysis of QSSEP suggests that, for a generic quantum diffusive system (at large time),  the large deviation fluctuations of quantum observables related to density or transport are those of its classical system counterpart. 
\begin{itemize}[leftmargin=1em]
	\item
 In particular, it has been conjectured that, under mild hypotheses, we have  the $\PP$-almost sure convergence 
	$$ \frac{1}{N} \log \Tr\left(\rho_{\rm eq} e^{\sum_{j=1}^N h_j \hat n_j} \right)  \xrightarrow[N\to\infty]{} \mathcal F_Q(h),$$
where $\rho_{\rm eq}$ denotes a random density matrix whose probability distribution is left invariant by the stochastic quantum dynamics and  the functional $\mathcal F_Q$ is deterministic, the same as for the classical system i.e. $\mathcal F_{Q}=\mathcal F_\mathrm{classical}$. However,  sub-leading corrections (in $1/\sqrt{N}$) are expected to be fluctuating and different in the classical and quantum processes. A similar conjecture  is expected to hold for the large deviation functions for transport quantities (e.g. current) \cite{hruza2022coherent,McCulloch2023}. 
\item Furthermore, it has also been conjectured in \cite{hruza2022coherent} that, if the macroscopic fluctuation theory applies to density fluctuations, then the coherent off-diagonal matrix elements of $\rho_\mathrm{eq}$ should obey a large deviation principle (with free probability structures).
\end{itemize}
\end{remark}		
		
\section{Quantum Symmetric Simple Exclusion Process}
\label{sec:QSSEP}

As its classical counterpart, QSSEP can be defined on any graph. Here we will focus on finite graphs: 
\begin{enumerate}%[leftmargin=2em]
	\item A chain of $N$ sites indexed $1, \dots, N$ with periodic boundary conditions; 
	\item A closed chain of $N$ sites without imposing periodic boundary conditions; 
	\item A chain of $N$ sites connected to boundary reservoirs, which we call open boundary conditions. 
\end{enumerate}
As we will see, the analysis of the first two cases is very similar, while the third case is much richer. Since we have $N$ sites in all cases,  the Hilbert space $ \mathcal H \cong (\mathbb{C}^2)^{\otimes N}$, and the system is described by a $2^N\times 2^N$ density matrix $\rho_t$. 

\subsection{Periodic boundary conditions}
 In that case, the system is not in contact with external reservoirs and form a closed system. The evolution is a stochastic unitary dynamics. 
 
 \subsubsection{Definition of the model}
 The time evolution of the density matrix $\rho_t=\rho_t^\omega$ is defined by the matrix stochastic differential equation
\begin{equation}
	\rho_{t+dt} = e^{-\I \mathrm dH_t} \rho_t e^{\I \mathrm dH_t},
	\label{eq:SDE}
\end{equation} 
with Hamiltonian  increment
$$ \mathrm dH_t = \sum_{j=1}^{N} c_{j+1}^{\dagger} c_j \mathrm dW_t^{(j)}+ c_j^{\dagger} c_{j+1} \mathrm d\overline W_t^{(j)} ,$$
where   $(W_t^{(j)})_{1\leq j\leq N}$ are independent complex Brownian motions and $\overline W_t^{(j)}$ denote their complex conjugate, so that we have 

\begin{equation}
 \left\langle W^{(j)},  \overline W^{(k)} \right\rangle_t   = \delta_{jk} \,t, \;\;  \left\langle W^{(j)},   W^{(k)} \right\rangle_t   = 0, \;\;   \left\langle \overline W^{(j)},  \overline W^{(k)} \right\rangle_t= 0,
\label{eq:correlationsBrownians}
\end{equation}
where $\delta_{jk}$ is the Kronecker delta symbol,  equal to $1$ when $j=k$ and $0$ otherwise, and $\langle X,Y\rangle_t$ denotes the quadratic (co)variation of stochastic processes $X_t$ and $Y_t$. 
 There is one complex Brownian motion per edge.

 The $c_j^\dagger$ and $c_k$ are respectively fermionic creation and annihilation operators, satisfying anti-commutation relations 
$$ \lbrace c_j^\dagger,  c_k\rbrace = \delta_{jk}, \;\; \lbrace c_j^\dagger, c_k^\dagger\rbrace = \lbrace c_j, c_k\rbrace =0 ~. $$ 
Here $\lbrace a, b\rbrace$ is the anti-commutator $ab+ba$.  When $N=1$, $c$ and $c^\dagger$ are given by the $2\times 2$ matrices 
$$ c= \begin{pmatrix}
	0&1\\0&0
\end{pmatrix}, \;\;\; c^\dagger = \begin{pmatrix}
0&0 \\ 1&0
\end{pmatrix},$$
so that 
$c^\dagger \ket 0= \ket 1$ and $c\ket 1=\ket 0$ if $\ket 0 = \begin{pmatrix} 1,0 \end{pmatrix}^T$ and $\ket 1 = \begin{pmatrix} 0,1 \end{pmatrix}^T$. 
We remark that the symbol $\dagger$ denotes, in fact, the Hermitian conjugation, which we could denote by $*$. We keep using the symbol $\dagger$ as this is the most common when dealing with creation and annihilation operators. When $N>1$, one should define the action of operators $c_j$ and $c_k^\dagger$ on $\left( \mathbb C^2\right)^{\otimes N}$. Concretely, they have a block structure acting independently on each component of the tensor product: 
$$ c_k  = \underbrace{
	\begin{pmatrix}
	1&0\\0&-1
\end{pmatrix}  
\otimes \dots \otimes 
\begin{pmatrix}
1&0\\0&-1
\end{pmatrix}  
}_{k-1  \text{ times}} \otimes \, c \otimes \underbrace{I\otimes \dots \otimes I}_{N-k \text{ times}},$$
and $c_k^{\dagger}$ is the conjugate -- see e.g. \cite[Appendix A]{stephan2020extreme}. We will see an another equivalent  definition in Section \ref{sec:Fock}  with a more transparent algebraic interpretation.  

By It\^o's formula,  \eqref{eq:SDE} can be rewritten as 
\begin{equation*}
	\mathrm d\rho_t = -\I [ \mathrm dH_t, \rho_t] -\frac 1 2 \underbrace{[\mathrm dH_t , [\mathrm dH_t, \rho_t]]}_{\text{It\={o} contraction}} ,
\end{equation*}
where the double commutator is computed by applying the rules \eqref{eq:correlationsBrownians}, i.e. that is all quadratic terms in $\mathrm dW^{(j)}_t$ or $\mathrm d\overline W^{(j)}_t$   are replaced by the differential of their quadratic (co)variation (these are the rules of It\={o}'s calculus). This description shows that $\rho_t$ solves a matrix valued  linear stochastic differential equation, hence it admits a unique strong solution.

\subsubsection{Average dynamics}
We can now easily  describe the average dynamics $\overline {\rho_t} = \mathbb E[\rho_t]$. We have 
\begin{equation} \label{eq:averaged-motion}
\mathrm d\overline{ \rho_t} = -\frac{1}{2} \mathbb E [\mathrm dH_t , [\mathrm dH_t, \overline{\rho_t}]] =: \overline{ \mathcal L} (\overline{\rho_t}) \mathrm \,dt ,
\end{equation}
where, again, the double commutator is simplified using  \eqref{eq:correlationsBrownians}. The linear operator $\overline{ \mathcal L} $ is called a \emph{Lindbladian}. This is the generator associated to the semi-group $\overline \Phi_t$ of trace preserving completely positive maps, 
$$ \overline {\rho_t}  = e^{t\overline{ \mathcal L} } \rho_0 = \overline \Phi_t(\rho_0). $$
Explicitly, the Lindbladian is given by 
$ \overline{ \mathcal L}   = \overrightarrow{\mathcal L} + \overleftarrow{\mathcal L}$
with 
$$ \overrightarrow{\mathcal L} \rho= \sum_{j=1}^{N} \ell_j \rho\ell_j^\dagger -\frac 1 2 \left( \ell_j^\dagger\ell_j \rho+\rho \ell_j^\dagger \ell_j \right),$$
where $\ell_j=c_{j+1}^\dagger c_j$. The operator $\overleftarrow{\mathcal L}$ is defined similarly, simply exchanging the roles of  $\ell_j$ and  $\ell_j^\dagger$. 
Both $\overrightarrow{\mathcal{L}}$ and  $\overleftarrow{\mathcal{L}}$ preserve density matrices $\rho_{\rm diag}$ diagonal in the particle number basis $\ket{\mathfrak{n}}$, $\mathfrak n\in\mathcal{C}$. 
 As mentioned above, such density matrix specifies a probability measure on classical configurations since it can be written as $\rho_\mathrm{diag}= \sum_{\mathfrak n\in \mathcal{C}}  \mathbf{P}(\mathfrak n) \ket{\mathfrak{n}}\bra{\mathfrak{n}} $.  To see how $\overline {\mathcal L}$ acts on $\rho_{\rm diag}$, it is enough to check how it acts on any edge configuration (since $\overline {\mathcal L}$ is the sum of terms acting on edges). We have:
\begin{align*}
\overline{ \mathcal L}  (\ket{00}\bra{00}) & =0 ,\\
\overline{ \mathcal L} (\ket{01}\bra{01}) & =  \ket{10}\bra{10} -\ket{01}\bra{01},\\
\overline{ \mathcal L} (\ket{10}\bra{10}) & =  \ket{01}\bra{01}-\ket{10}\bra{10} ,\\
\overline{ \mathcal L} (\ket{11}\bra{11}) & =0 .
\end{align*}
Thus $\overline{ \mathcal L}$ acts as the Markov generator of SSEP on diagonal density matrices.

\begin{remark}
Since $\rho_t$ is a random variable, one may also look at the evolution of its higher moments. Those are going to evolve linearly in time, as for the average \eqref{eq:averaged-motion}, but with a Lindblad operator depending on the order of the moment.
\end{remark}

\subsubsection{An important property} 
	The QSSEP dynamics possesses a $U(1)^N$ invariance. This means that we can change $c_j$ into $c_je^{\I\theta_j}$, $\theta_j\in\mathbb{R}$,  (so that $c_j^\dagger$ is changed into $c_j^\dagger e^{-\I\theta_j}$) without changing the law of the states $\rho_t$ because such transformation can be absorbed in a redefinition of the Brownian motions without changing their law.
	
\subsubsection{Fock space formalism and further remarks}
\label{sec:Fock}
	It may be convenient to work with the Fock space representation of the Hilbert space $\mathcal H$ spanned by $\ket{\mathfrak n}, \mathfrak n \in \mathcal C$, considered in the previous section. It is isomorphic to the exterior algebra $\bigwedge\! V$ where $V = \C^N$ is the one-particle Hilbert space, i.e. 
$$  \mathcal H \cong \bigwedge\! V = \C\oplus V\oplus V\wedge V \oplus V\wedge V\wedge V\oplus \dots $$ 
We then view $\C$ as spanned by zero particle configurations, $V$ spanned by  one particle configurations, $V\wedge V$ by two particle configurations, etc. 
In this representation, the creation and annihilation fermion operator act via wedge products or contractions on elements of the exterior algebra $\bigwedge \!V$. More precisely, if we denote by $(e_i)_{1\leq i\leq N}$ an orthonormal basis of $V$, we define $c_i^\dagger = c^+(e_i)$ and $c_i=c^-(e_i)$ where the linear operators $c^+(u)$ and $c^-(u)$ are defined for $u\in V$ as 
	\begin{align*}
		c^+(u) (u_1 \wedge \dots \wedge u_n) &= u\wedge u_1\wedge \dots \wedge u_n,\\
		c^-(u) (u_1 \wedge \dots \wedge u_n) &= \sum_{i=1}^n (-1)^i \langle u, u_i\rangle u_1\wedge \dots \wedge \widehat{u_i}\wedge \dots\wedge u_n,
	\end{align*}
 where $\widehat{u_i}$ indicates that the contribution of $u_i$ is absent in the wedge product.
	The local number operators $\hat n_j$ can be written as $\hat n_j= c_j^\dagger c_j$. They count the number of particles at site $j$. Again, we refer to \cite{meyer1995quantum} for a more complete presentation.
	
	\begin{remark}
	By the conservation of particles, for each $1\leq k\leq N$, the subspace $V^{\wedge k}$ is stable under the dynamics, hence $U_t$ is a block matrix. Restricting to the sub-block corresponding to $V$, i.e. the one particle dynamics, the Hamiltonian increment is given by 
	\begin{equation}  
		dh_t:= dH_t\vert_V =
		\begin{pmatrix}
			0 & dW^{(1)}_t & 0 & \ldots & 0 & d\overline W^{(N)}_t \\ 
			d \overline W^{(1)}_t & 0 & dW^{(2)}_t & 0 & \ldots & 0 \\
			0 & d\overline W^{(2)}_t & 0 & dW^{(3)}_t &  & \vdots  \\ 
			\vdots 	&  &\ddots & \ddots & \ddots & 0 \\ 
			0& & &\ddots& 0 & d W_t^{(N-1)}\\
			dW^{(N)}_t & 0 & \ldots & 0 & d\overline W^{(N-1)}_t & 0
		\end{pmatrix}.
		\label{eq:oneparticleSDE}
	\end{equation}
	\label{ex:oneparticle}
\end{remark}
	
	\begin{remark}
		If we replace the fermionic Fock space and creation/annihilation operators by bosonic analogues, QSSEP becomes the quantum symmetric simple inclusion process (QSSIP). As the name suggests, when averaging over the noise, it reduces to the symmetric simple inclusion process \cite{bernard2025large}.
	\end{remark}	

\begin{remark} 
	Given the amount of interest in the asymmetric simple exclusion process (ASEP) and in particular in its the totally asymmetric version (TASEP), it would be natural to define a quantum analogue. It may be checked that that ${\overline {\mathcal L}}_\mathrm{ASEP}  = p \overrightarrow{\mathcal L} +q \overleftarrow{\mathcal L}$ acts as the ASEP Markov matrix on diagonal density matrices, which implies that one needs to consider a family of independent \emph{non-commutative} Brownian motions, i.e. Brownian motions such that It\=o contractions are computed using quadratic variations  $ \left\langle W_t^{(j)}, \overline W_t^{(j)} \right\rangle_t= p\, t$ and  $ \left\langle \overline W_t^{(j)}, W_t^{(j)} \right\rangle_t= q\, t$, see \cite{jin2020stochastic, robertson2021exact}. 
\end{remark}

\begin{remark} 
	One may wonder whether the QSSEP admit a natural discrete analogue. Of course, one may consider discrete dynamics  $\rho_{t+1} =e^{-\I (H_{t+1}-H_t)} \rho_t e^{\I (H_{t+1}-H_t) }$ where $H_t$ is the same as before,  or replacing Brownian motions by random walks. However, the averaged dynamics  $\overline{\rho_t}$ would no longer be described as  a discrete particle system with local moves.
Alternatively, one can consider random unitary circuits (locally conserving the number of particules) whose average dynamics are expected to be described by the macroscopic fluctuation theory of SSEP at large time \cite{Gullans2019,McCulloch2023}.
More generally, random quantum circuits are examples of discrete time stochastic quantum dynamics -- see  the survey \cite{Fisher2023}.
\end{remark}

\subsection{Open boundary conditions}
Let us now consider the system  in contact with reservoirs at its boundary,  which we call open boundary conditions. In the classical case, particles may enter or exit the system at sites $1$ and $N$ at certain exponential rates, as depicted below:  
\begin{center}
	\begin{tikzpicture}[scale=0.8]
		\draw[thick] (1,0) -- (10,0);
		\foreach \x in {1,...,10}{
		\draw (\x,-0.1) -- (\x,0.1);
		}
	%	\draw (0,-0.1) node[below]{\footnotesize $0$};
		\draw (1,-0.1) node[below]{\footnotesize $1$};
		\draw (2,-0.1) node[below]{\footnotesize $2$};
		\draw (3,-0.1) node[below]{\footnotesize $3$};
		\draw (10,-0.1) node[below]{\footnotesize $N$};
		\draw[thick, -stealth'] (0,0.5) to[bend left] (1,0.5);
		\draw (0.5,1) node {$\alpha_1$};
		\draw[thick,stealth'-] (0,-0.6) to[bend right] (1,-0.6);
		\draw (0.5,-1.1) node {$\beta_1$};
		\draw[thick,-stealth'] (10,0.5) to[bend left] (11,0.5);
		\draw (10.5,1) node {$\beta_N$};
		\draw[thick,stealth'-] (10,-0.6) to[bend right] (11,-0.6);
		\draw (10.5,-1.1) node {$\alpha_N$};
	\end{tikzpicture}
\end{center}
The dynamics of the open quantum SSEP is defined in a similar fashion as in the closed case,  except at the boundary where the gain/loss process are defined using other CP maps \cite{bernard2019open}. The dynamics are defined by 
$$ d\rho_t = -\I [ dH_t, \rho_t] -\frac 1 2 [dH_t , [dH_t, \rho_t]]  + \mathcal L_{\rm bdry} \rho_t \,\mathrm d t ,$$ 
where now 
$$ dH_t = \sum_{j=1}^{N-1} c_{j+1}^{\dagger} c_j dW_t^{(j)}+ c_j^{\dagger} c_{j+1} d\overline W_t^{(j)} ,$$
with again one complex Brownian motion per edge, and $\mathcal L_{\rm bdry}$ is a boundary Lindbladian term 
$$ \mathcal L_{\rm bdry} = \alpha_1 \mathcal L_1^+ +\beta_1\mathcal L_1^-+\alpha_N \mathcal L_N^+ +\beta_N\mathcal L_N^-  ,$$
where (for $i=1,N$ the boundary sites)
 $$ \mathcal L_i^+ \rho = c_i^\dagger \rho c_i -\frac 1 2 \left( c_i c_i^\dagger \rho + \rho c_i c_i^\dagger \right),  $$ 
 and $\mathcal L_i^-$ is defined similarly exchanging $c_i$ and $c_i^\dagger$. 
 The operator $ \mathcal L_i^+$ (resp. $ \mathcal L_i^-$) injects (resp. extracts) particle at the boundary site $i$ at unit rate, since $ \mathcal L_i^+ \ket{0}\bra{0}=\ket{1}\bra{1}-\ket{0}\bra{0}$  and $ \mathcal L_i^+ \ket{1}\bra{1}=0$ (and similarly for $ \mathcal L_i^-$).

\begin{remark}
	The restriction of the Hamiltonian increment to the one-particle sector is slightly different in the open case, compared to the closed case, to take the boundary conditions into account:
	\begin{equation} 
	dh_t:= dH_t\vert_V =
	\begin{pmatrix}
		0 & dW^{(1)}_t & 0 & \ldots & 0 &  0\\ 
		d \overline W^{(1)}_t & 0 & dW^{(2)}_t & 0 & \ldots & 0 \\
		0 & d\overline W^{(2)}_t & 0 & dW^{(3)}_t &  & \vdots  \\ 
	\vdots 	&  &\ddots & \ddots & \ddots & 0 \\ 
		0& & &\ddots& 0 & d W_t^{(N-1)}\\
		0 & 0 & \ldots & 0 & d\overline W^{(N-1)}_t & 0
	\end{pmatrix}.
\label{eq:oneparticleSDE-open}
\end{equation}
	\label{ex:oneparticle-open}
\end{remark}
 
 \subsection{Another approach via Wick's theorem} 
 \label{sec:Wick}
Let us assume that the initial state has the form 
\begin{equation} \label{eq:rho-quadratic}
 \rho_0 = \frac{1}{Z}e^{-\sum_{i,j=1}^N c_i^\dagger M_{i,j} c_j}, \;\;\;Z= \Tr\left( e^{-\sum_{i,j=1}^N c_i^\dagger M_{i,j} c_j} \right). 
 \end{equation}
for some Hermitian matrix $M$, possibly random. This is a reasonable assumption because this class is preserved by the dynamics. Further,  we believe that the stationary measure of the system is unique and does belong to that class. 

For any $t$, the quantum probability that there are particles at locations $j_1, \dots, j_k$ is  $\Tr\left( \rho_t \hat n_{j_1} \dots \hat n_{j_k}\right)$, with number operators $\hat n_j= c_j^\dagger c_j$ (note that they are projectors, $\hat n_j^2=\hat n_j$).   By Wick's theorem   (for distinct indices $j_k$)
\begin{equation}
	\Tr\left( \rho_t \hat n_{j_1} \dots \hat n_{j_k}\right) = \det\left( \Tr(\rho_t \, c_{j_n}^\dagger c_{j_m} ) \right)_{n,m=1}^k.
	 \label{eq:applicationWick}
\end{equation}
This means that $\PP$-almost surely, the quantum SSEP configuration at any time is a determinantal point process (DPP) on $\lbrace 1, \dots, n\rbrace$ with a random correlation kernel 
$$ G_t(i,j) := \Tr( \rho_t\,  c_i^\dagger c_j ). $$ 
It is remarkable that the classical SSEP particle locations do not form a determinantal process but they may be realized as an explicit average of random determinantal processes, given by particles locations in QSSEP. 

For more general observables of the form $O= e^{ \sum_{i,j=1}^N c_i^\dagger A_{i,j}  c_j}$, we have
\begin{equation} \label{eq:rho-on-O}
 \Tr\left( \rho_t O  \right) = \det[I + G_t(e^A-I)],
 \end{equation}
so that $\rho_t$ is determined (at least its action on observables of the above form) by the random $N\times N$ matrix $G$. We note that \eqref{eq:rho-on-O} implies \eqref{eq:applicationWick}, by considering an expansion for small $A$. We thus have reduced the problem of understanding the $2^N\times 2^N$ density  matrix $\rho_t$ to the control of the $N\times N$ two-point correlation matrix $G_t$.

For closed or periodic boundary conditions, the evolution of $G_t$ is unitary,
\begin{equation}
	G_{t+\mathrm dt} = e^{-\I \mathrm dh_t} G_te^{\I \mathrm dh_t} ,
	\label{eq:evolutionG}
\end{equation}
 where $dh_t$ is the reduction of the Hamiltonian increment $dH_t$ to the one-particle sector,
\begin{equation}
	 \mathrm dh_t = \sum_{j=1}^N E_{j, j+1} \, \mathrm dW_t^{(j)}+ E_{j+1,j} \,\mathrm d\overline W_t^{(j)},
	 \label{eq:evolutiondh}
\end{equation}
 where $E_{i,j}$ is the $N\times N$ matrix with zero entries except a $1$ in position $(i,j)$  modulo $N$.   
  In other terms, $\mathrm dh_t$ is given by \eqref{eq:oneparticleSDE}  (periodic boundary conditions) or \eqref{eq:oneparticleSDE-open} (closed chain without reservoirs). Again, by It\={o}'s formula,
 \begin{equation}
 	dG_t =  -\I [dh_t, G_t] -\frac 1 2 \underbrace{[ dh_t, [dh_t, G_t]]}_{\text{It\={o} contraction}}.
 	\label{eq:ItoforG}
 \end{equation} 
 
In the open boundary case, one needs to adapt the one-particle Hamiltonian increment so that $dh_t$ is then given by \eqref{eq:oneparticleSDE-open} and to add boundary terms. Each coefficient of $G_t$ obeys a stochastic differential equation of the form (see Supplementary Materials in the Arxiv version of \cite{bernard2019open})
\begin{subequations}
	\begin{align}
		\mathrm d G_t(i,i) &= \text{unitary dynamics } + \sum_{p\in\lbrace 1,N\rbrace} \delta_{pi}(\alpha_p(1-G_t(i,i))-\beta_p G_t(i,i)) )\mathrm dt ,\\ 
		\mathrm d G_t(i,j) &= \text{unitary dynamics } -\frac 1 2  \sum_{p\in\lbrace 1,N\rbrace} (\alpha_p+\beta_p)(\delta_{pi}+\delta_{pj} )G_t(i,j)  \;\mathrm dt. 		
	\end{align}
	\label{eq:boundaryconditionforG}
\end{subequations}

 \begin{example}
	Take $N=2$ and $\alpha_1=\beta_1=\alpha_2=\beta_2=0$ (closed chain). There is only one edge and hence only one complex Brownian motion $W_t$. Then, $G_t$ is a $2\times 2$ Hermitian matrix (if $G_0$ is Hermitian) solving the system of SDE,
\begin{align*}
	\mathrm dG_t(1,1) &= -\I G_t(2,1) \mathrm dW_t - \I G_t(1,2) \mathrm d\overline W_t +(G_t(2,2)-G_t(1,1))\mathrm dt ,\\
	\mathrm d G_t(1,2) &= -\I (G_t(2,2)-G_t(1,1))\mathrm dW_t -G_{t}(1,2)\mathrm dt,
	\end{align*} 
	under the constraints $G_t(2,1)=\overline{G_t(1,2)}$ and $G_t(2,2)=n_1+n_2-G_t(1,1)$. The term $n_1+n_2$, equal to the total number of particles,   comes from the fact that $\Tr(G_t)=\Tr((c_1^\dagger c_1+c_2^\dagger c_2)\rho_t )=n_1+n_2$. Let us assume that the total number of particles is $1$ (it is conserved because the boundary parameters are all zero). Letting $D_t= G_t(1,1) - G_t(2,2)$, $R_t = \Re \left[ G_t(1,2)\right]$,  $I_t =  \Im \left[ G_t(1,2)\right]$ and let $W_t=\frac{B_1(t)+\I B_2(t)}{\sqrt 2}$, we arrive at 
\begin{equation}\begin{cases}
\mathrm dD_t &= -2D_t \,\mathrm dt + 2\sqrt 2\left( R_t \mathrm dB_2-I_t \mathrm dB_1\right),\\
\mathrm dR_t &= \frac{-1}{\sqrt 2} D_t \mathrm dB_2 - R_t\,\mathrm dt,\\
\mathrm dI_t &= \frac{1}{\sqrt 2} D_t \mathrm dB_1 - I_t \,\mathrm dt. 
	\end{cases}
	\label{eq:systemSDE}
\end{equation}
One may check that $D_t^2+4(R_t^2+I_t^2)$ is conserved, which of course we already knew  since it corresponds to the conservation of the determinant under the unitary dynamics.
\label{ex:N=2}
\end{example}

\begin{remark}
 	Here is another perspective from the point of view of representation theory.  The Lie algebra $\mathfrak{gl}_N(\mathbb C)$ acts on $\bigwedge V$ by mapping $E_{i,j}$ to $c_i^\dag c_j$. By exponentiation, this yields a representation of $GL_N(\mathbb C)$ on $\bigwedge V$. Let us denote by $\Gamma :GL_N(\mathbb C) \to \mathrm{End}( \bigwedge V)$ this representation map and  denote by $\gamma$ the Lie algebra representation map from $\mathfrak{gl}_N(\mathbb C)$ to $\mathrm{End}( \bigwedge V)$. Elements in $\Gamma(GL_N(\mathbb C))$ are of the form 
	$$\Gamma\left(e^A\right)=e^{ \sum_{i,j=1}^N c_i^\dagger A_{i,j}  c_j},$$ 
	as considered above, and the character formula is $\Tr(\Gamma(e^A))=\det(I+e^A)$.	
	We thus let the $*$-algebra $\mathcal A$ be generated by those elements and we restrict to density matrices $\rho=\frac{\Gamma(R)}{\Tr (\Gamma(R))}$ with $R=R^* \in GL_N(\mathbb C)$, as in \eqref{eq:rho-quadratic} above. Using the group law $\Gamma(R)\Gamma(O)=\Gamma(RO)$, we have, 
$$ \Tr\left( \rho\Gamma(O)\right) = \frac{\det(I+RO)}{\det(I+R)} = \det(I+ G(O-I)) , $$
as in \eqref{eq:rho-on-O}, with $G=\frac{R}{I+R}$. Taking $O=e^{\eps X}$ for $X\in  \mathfrak{gl}_N(\mathbb C)$ and letting $\eps$ going to zero  allows to identify $G$ as the $N\times N$ matrix 
$$ G_{i,j} =  \Tr \left(\rho \, \gamma(E_{i,j})\right)  = \Tr \left( \rho c_i^\dagger c_j\right),$$
which thus fully determines the density matrix.
\end{remark}

 We henceforth use the symbol `$\tr$' to denote the trace over the $N$-dimensional space (i.e. the one-particle sector), as opposed to the trace `$\Tr$' over the $2^N$-dimensional Fock space.

\subsection{Stationary measure of periodic/closed QSSEP and random matrix theory}
In the periodic or closed boundary case, dynamics are unitary, so that the spectrum of $G_t$ is preserved.  Clearly, the evolution defined by \eqref{eq:evolutionG} and \eqref{eq:evolutiondh} is unitary, so that the spectrum of $G_t$ is constant.  This implies that $G_t$ is supported on the orbit of $G_0$ under the adjoint action of the unitary group. 

\begin{theorem}[{\cite[Proposition 1]{bauer2018equilibrium}}] Let $G_0$ be a $N\times N$ Hermitian matrix. The Haar distribution on the orbit of $G_0$ is the unique stationary measure of the stochastic differential equation on Hermitian matrices given by \eqref{eq:ItoforG} where $\mathrm dh_t$ is given by  \eqref{eq:evolutiondh} with initial condition $G_0$. The same result holds when $\mathrm dh_t$ is given by \eqref{eq:oneparticleSDE-open}.
	\label{claim:HaarOrbit}
\end{theorem}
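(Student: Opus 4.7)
The plan is to exploit that the SDE \eqref{eq:ItoforG} is the infinitesimal form of the unitary conjugation $G_{t+dt}=e^{-\I\,\mathrm{d}h_t}G_t e^{\I\,\mathrm{d}h_t}$, which I would first reformulate as $G_t = U_t G_0 U_t^*$, where $U_t$ is the left Brownian motion on $SU(N)$ satisfying $U_{t+dt}=e^{-\I\,\mathrm{d}h_t}U_t$ with $U_0=I$. This is a bona fide stochastic flow since $\mathrm{d}h_t$ is a traceless Hermitian increment, so $\I\,\mathrm{d}h_t\in\mathfrak{su}(N)$. In particular the spectrum of $G_t$ is conserved, so every invariant probability measure is supported on the orbit $\mathcal{O}_{G_0}:=\{V G_0 V^* : V\in U(N)\}$; write $\mu_{\mathcal O}$ for the push-forward of normalized Haar measure on $U(N)$ along the orbit map $V\mapsto V G_0 V^*$, the measure whose stationarity and uniqueness we must establish.

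Invariance of $\mu_{\mathcal O}$ is a pure symmetry argument. The Brownian increment $\mathrm{d}h_t$ is independent of $G_t$, and $\mu_{\mathcal O}$ is manifestly invariant under conjugation by any fixed unitary (this is just left-invariance of Haar on $U(N)$ after push-forward). Hence, assuming $G_t\sim\mu_{\mathcal O}$ and conditioning on $\mathrm{d}h_t$,
\begin{equation*}
\mathbb{E}\bigl[f(G_{t+dt})\mid \mathrm{d}h_t\bigr] = \mathbb{E}\bigl[f(e^{-\I\,\mathrm{d}h_t}G_t e^{\I\,\mathrm{d}h_t})\mid\mathrm{d}h_t\bigr] = \int f\,\mathrm{d}\mu_{\mathcal O},
\end{equation*}
almost surely for any bounded measurable $f$, and $G_{t+dt}\sim\mu_{\mathcal O}$ follows by taking expectation in $\mathrm{d}h_t$.

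For uniqueness, I would reduce to ergodicity of $U_t$ on $SU(N)$. The real Lie algebra $\mathfrak g\subseteq\mathfrak{su}(N)$ generated by the driving directions is spanned by the anti-Hermitian matrices $\I(E_{j+1,j}+E_{j,j+1})$ and $E_{j+1,j}-E_{j,j+1}$ indexed by $j\in\{1,\dots,N\}$ modulo $N$ in the periodic case \eqref{eq:oneparticleSDE}, and by $j\in\{1,\dots,N-1\}$ in the open case \eqref{eq:oneparticleSDE-open}. A direct commutator calculation starting from
\begin{equation*}
\bigl[\I(E_{j+1,j}+E_{j,j+1}),\,E_{j+1,j}-E_{j,j+1}\bigr]=2\I(E_{j,j}-E_{j+1,j+1})
\end{equation*}
recovers the entire Cartan subalgebra of $\mathfrak{su}(N)$, and iterated nearest-neighbor commutators then produce every root vector, so $\mathfrak g=\mathfrak{su}(N)$ in both settings. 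By H\"ormander's theorem the Markov semigroup of $U_t$ is hypoelliptic on $SU(N)$, and classical ergodicity results for left Brownian motions on compact Lie groups imply that the law of $U_t$ converges to normalized Haar measure on $SU(N)$ as $t\to\infty$. Since $SU(N)$ and $U(N)$ share the same conjugation orbits (the $U(1)$ center acts trivially), pushing forward along $V\mapsto VG_0V^*$ yields $G_t\to\mu_{\mathcal O}$ in law from any deterministic initial condition, forcing any invariant probability measure to coincide with $\mu_{\mathcal O}$.

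The main obstacle is the Lie algebra computation verifying $\mathfrak g=\mathfrak{su}(N)$: a finite but non-trivial sequence of nearest-neighbor commutators is required to reach every root vector, and the argument must be adapted to the open chain \eqref{eq:oneparticleSDE-open} where the wrap-around generator is absent (so the Cartan directions are produced one at a time, and higher-distance off-diagonal elements are built by propagating along the chain). Once $\mathfrak g=\mathfrak{su}(N)$ is established, both invariance and convergence reduce to standard symmetry and ergodicity principles for diffusions on compact Lie groups.
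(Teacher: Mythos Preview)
Your argument is correct, but the route differs from the paper's. Both proofs hinge on the same Lie--algebraic fact---that the nearest--neighbour matrices $E_{j,j+1},E_{j+1,j}$ form a system of simple root generators of $\mathfrak{su}(N)$---yet they exploit it differently. The paper works directly with an arbitrary stationary measure: writing $Z(A)=\mathbb{E}[e^{\tr(AG)}]$, stationarity and It\^o's formula give
\[
\Bigl(\sum_{j} D_j^+D_j^-+D_j^-D_j^+\Bigr)Z(A)=0,
\]
and a positivity argument (the operator is a sum of ``squares'' in the adjoint representation) then forces each $D_j^{\pm}Z(A)=0$ individually, whence $D(X)Z(A)=0$ for all $\I X\in\mathfrak{su}(N)$ and $Z$ is $SU(N)$--conjugation invariant. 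You instead lift to the process $U_t$ on $SU(N)$ and invoke H\"ormander's theorem plus ergodicity of left--invariant hypoelliptic diffusions on compact groups to get convergence of the law of $U_t$ to Haar, which you push forward to the orbit. Your approach is more dynamical---it yields convergence in law, not merely uniqueness of the invariant measure---at the price of importing the H\"ormander machinery; the paper's approach is more self--contained and algebraic, but requires the additional step of passing from the second--order identity to the first--order ones. One terminological quibble: your $U_t$ is not the standard Brownian motion on $SU(N)$ (the paper explicitly remarks on this), so it would be cleaner to call it a left--invariant hypoelliptic diffusion; the substance of your argument is unaffected.
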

\begin{proof}[Sketch of proof] 
First of all, the dynamics preserves the unitary orbit of $G_0$. This is clear from \eqref{eq:SDE} and it can be checked formally using the definition of the dynamics \eqref{eq:ItoforG}, by verifying  that $\tr(G_t^k)$ remains constant for all $k$, so that the spectrum is preserved.

 Thus, we are considering a Markovian diffusion on the unitary orbit of $G_0$. Since this space is compact, there exists at least one stationary probability measure. 
	
Then, the essential input is the fact that the set of matrices $E_{j, j+1}$ and $E_{j+1, j}$ for $1\leq j\leq N-1$ that appear in \eqref{eq:evolutiondh} forms a system of simple root generators of the Lie algebra $\mathfrak{su}(N)$. This implies that any stationary measure is invariant by unitary conjugation, hence it is unique and Haar distributed. 

To prove this $U(N)$ invariance, assume that $G$ is distributed according to a stationary measure, and  let
		$$ Z(A) = \mathbb E\left[e^{\tr(AG)} \right],$$
		where the expectation is taken over the stationary measure. Stationarity implies that 
		$$ Z(A) =\mathbb{E}\left[ Z[e^{\I \mathrm dh_t} A e^{-\I\mathrm dh_t}]\right] $$
where the expectation is now taken with respect Brownian motions in $\mathrm dh_t$.	
Applying It\={o}'s lemma, this implies that 
	\begin{equation}
		\left( \sum_{j=1}^{N-1} D_j^+D_j^-+ D_j^-D_j^+ \right) Z(A) =0,
		\label{eq:increment-su}
	\end{equation}
	where $D_j^+=D(E_{j,j+1}), D_j^-=D(E_{j+1,j})$ and for $\I X\in \mathfrak{su}(N)$, $D(X)$ is the operator 
	$$ D(X) F(A)  = \partial_sF(e^{\I s X}A e^{-\I sX}) \Big\vert_{s=0}.$$
	From \eqref{eq:increment-su}, one shows that $D_j^{\pm} Z(A)$ for all $1\leq j\leq N-1$. Because the $E_{j,j+1}$ and $E_{j+1,j}$ for a system of simple roots, one then concludes that $D(X) Z(A)=0$ for all $\I X\in \mathfrak{su}(N)$ which eventually implies that for any $U\in SU(N)$, 
	$$ Z(A) = Z(U^* A U).$$ 
	Details of the argument can be found in \cite[Appendix B]{bauer2018equilibrium}. 
\end{proof}

We stress that the Markov process describing the evolution of $G_t$ does not really have a unique stationary measure, in the sense that it depends on the initial condition $G_0$. As a consequence of Claim \ref{claim:HaarOrbit}, we expect that as $t$ goes to infinity, the law of $G_t$ weakly converges to the law of $U^* G_0 U$ where $U$ is a random Haar distributed unitary  matrix. We may then compute the Laplace transform of the probability measure of $G_t$ in the $t\to\infty$ limit as the Harish-Chandra--Itzykson--Zuber integral
\begin{equation} 
\lim_{t\to\infty}\mathbb E\left[ e^{N \tr(AG_t)} \right] = \int_{U(N)} \mathrm dU\,  e^{N \tr\left( A U^* G_0 U\right)}  = Z(NA) 	  ,
\end{equation}
where $\mathrm dU$ is Haar measure. A first contact with free probability emerges in the large $N$ limit: 
if $G_0$ admits a limiting spectral measure $\mu_0$, and $A$ has finite rank (and further assumptions), this integral over the unitary group becomes in the $N\to\infty$ limit the formal series \cite{itzykson1980planar, collins2002moments}
$$  \lim_{N\to\infty} \frac{1}{N} \log Z(zNA) =  \sum_{n=1}^{\infty} \frac{z^n}{n}  \tr(A^n) \kappa_n(\mu_0), $$
where $\kappa_n$ denote the free cumulants of $\mu_0$ which are defined in Section \ref{sec:cumulants}.

\begin{example} We reconsider the special case of Example \ref{ex:N=2}. 
	Let us fix the initial condition, for instance  
	$$G_0=\begin{pmatrix}
		1&0\\0&0
	\end{pmatrix}.$$ 
	For a Haar distributed unitary matrix $U$, it is well-known that $\vert U_{11}\vert^2$ is uniformly distributed in $[0,1]$. Hence, by Claim \ref{claim:HaarOrbit}, $G(1,1)$ should be uniformly distributed in $[0,1]$ for large $t$. Equivalently, $D_t$ in Example \ref{ex:N=2} should be uniform in $[-1,1]$. To check that, it is not clear that the system of SDE \eqref{eq:systemSDE} can be solved explicitly, but we do find on numerical simulations (see Figure \ref{fig:simulations}) that the probability distribution of $D_t$ approaches the uniform one. 
	\begin{figure}
		\includegraphics[width=0.3\textwidth]{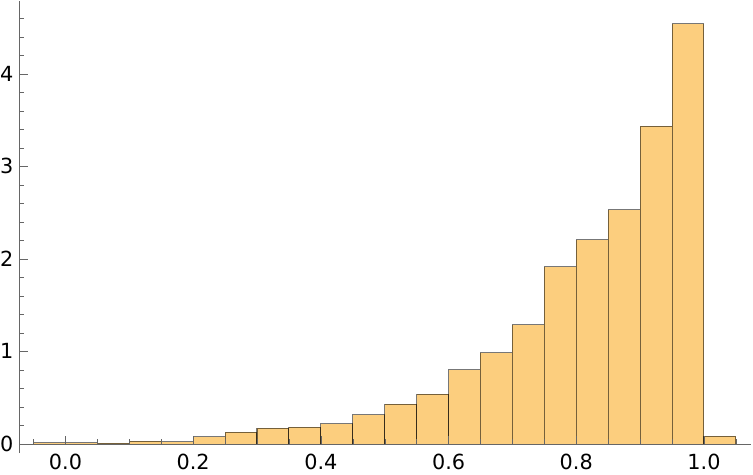}
				\includegraphics[width=0.3\textwidth]{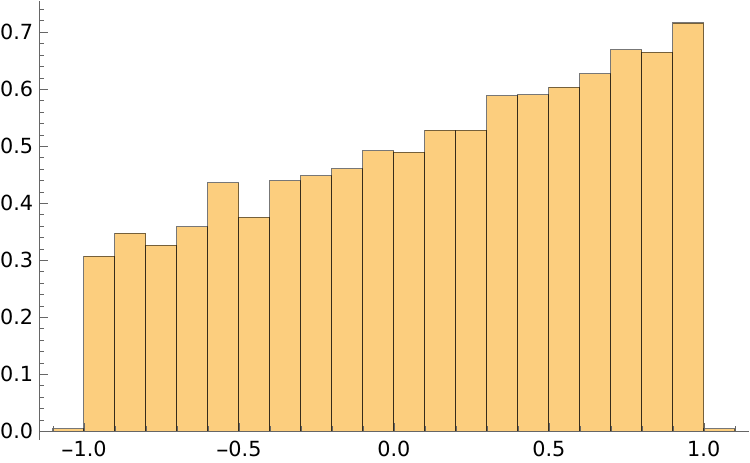}
						\includegraphics[width=0.3\textwidth]{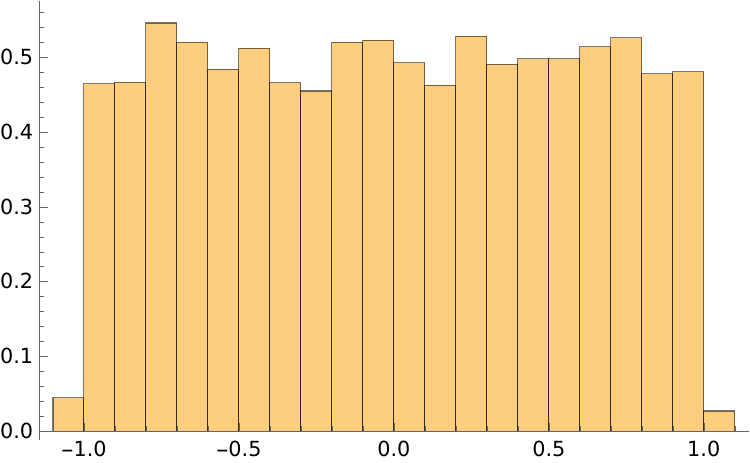}
		\caption{Probability distribution of $D_t$ for $t=0.1, t=1$ and $t=10$. Initially the distribution is concentrated at $1$, and it converges to the uniform distribution on $[-1,1]$. The few values outside $[-1,1]$ are numerical errors due to the simulated dynamics being not perfectly unitary.}
		\label{fig:simulations}
	\end{figure}
\end{example}

\begin{remark}
The solution to the SDE $U_{t+\mathrm dt} = e^{-\I\mathrm dh_t}U_t$ is different from the standard notion of Brownian motion on $U(N)$  or $SU(N)$. This stochastic process, as well as the process $G_{t+\mathrm dt} = e^{-\I \mathrm dh_t} G_te^{\I \mathrm dh_t}$ from \eqref{eq:evolutionG}, does not seem to have been considered in the mathematics literature, and it seems  interesting to study it further. In particular, one could compare the mixing time of these diffusions to the mixing time of the Brownian motion on $U(N)$ \cite{saloff2004convergence, meliot2012cut}. In the context of random walks on the symmetric group, the analogous problem  is  well-known (and solved): the mixing time is different when the walk is generated by all transpositions \cite{diaconis1981generating} or only by adjacent transpositions \cite{lacoin2016mixing} --- the latter case being related to the classical SSEP (with closed boundary conditions).
\end{remark}

\section{Open QSSEP and free probability}
\label{sec:openQSSEP}
\subsection{Preliminaries about classical and free cumulants}
\label{sec:cumulants}
For random variables $X_1, \dots, X_n$, and any $(k_1, \dots, k_n) \in \Z_{\geq 1}^n$, the classical cumulants $\Kclassical_{k_1, \dots, k_n}(X_1, \dots, X_n)$ are generally defined through the coefficients in the power series 
$$ \log \mathbf E\left[ e^{t_1X_1+\dots+t_nX_n}\right] = \sum_{k_1, \dots, k_n\geq 1} \Kclassical_{k_1, \dots, k_n} (X_1, \dots, X_n) \prod_{i=1}^n \frac{t_i^{k_i}}{k_i!} . $$
The cumulants $\Kclassical(X_1, \dots, X_n):=\Kclassical_{1,\dots,1}(X_1, \dots, X_n) $ are related to the moments via 
\begin{equation*}	
	\mathbf E[X_1 \cdots X_n] = \sum_{\pi \in \mathcal P_n}  \prod_{B\in \pi} \Kclassical\left(\lbrace X_i \rbrace_{i\in B}\right) .
	\label{eq:classicalcumulantstomoments}
\end{equation*}
where the sum runs over $\mathcal P_n$, the set of partitions $\pi$ of  $\lbrace 1, \dots, n\rbrace$ into disjoint subsets denoted $B$. Equivalently, cumulants are expressed in terms of moments  as 
\begin{equation}	
	\Kclassical(X_1, \dots, X_n) = \sum_{\pi\in \mathcal P_n} \left(\vert\pi\vert -1\right)! (-1)^{\vert \pi\vert -1} \prod_{B\in \pi} \mathbf E\left[ \prod_{i\in B} X_i\right],
	\label{eq:classicalmomentstocumulants}
\end{equation}
where $\vert\pi\vert $ denotes the number of blocks in the partition. In particular, the first cumulant is the expectation $\Kclassical(X) = \mathbf E[X]$, the second cumulant is the covariance 
\begin{equation}
	\Kclassical(X,Y)= \mathbf E[XY] - \mathbf E[X] \mathbf E[Y],
	\label{eq:secondclassicalcumulant}
\end{equation}
and the third cumulant is given by 
\begin{equation}
	\Kclassical(X,Y,Z)= \mathbf E[XYZ] - \mathbf E[X] \mathbf E[YZ] - \mathbf E[Y] \mathbf E[ZX] - \mathbf E[Z] \mathbf E[XY] + 2 \mathbf E[X]\mathbf E[Y]\mathbf E[Z].
	\label{eq:thirdclassicalcumulant}
\end{equation}
A well known property of classical cumulants is that when  the variables $X_i$ are independent,  $\Kclassical_k(X_1+\dots + X_n) = \Kclassical_k(X_1)+ \dots + \Kclassical_k(X_n)$ for all $k\geq 1$.

There exist analogous properties in free probability theory. In this context,  random variables become elements of a unital associative $*$-algebra $\mathcal A$ over $\mathbb C$ equipped with a linear map  (non-commutative expectation) $\varphi: \mathcal A\to \mathbb C$ such that $\varphi(1)=1$ and $\varphi(a^*a)>0$ for $a\in\mathcal{A}$. The free cumulants $\kappa (a_1, \dots, a_n)$ for $a_i\in \mathcal A$ are then defined implicitly via 
\begin{equation}	
	\varphi(a_1 \cdots a_n) = \sum_{\pi \in \mathrm{NC}(\mathcal P_n)} \prod_{B\in \pi} \kappa\left( (a_i)_{i\in B}\right),
	\label{eq:deffreecumulants}
\end{equation}
 where $\mathrm{NC}(\mathcal P_n)$ is the subset of $\mathcal P_n$ consisting of non-crossing partitions (the notion of non-crossing partition can be understood from the examples in Figure \ref{fig:noncrossing}).
 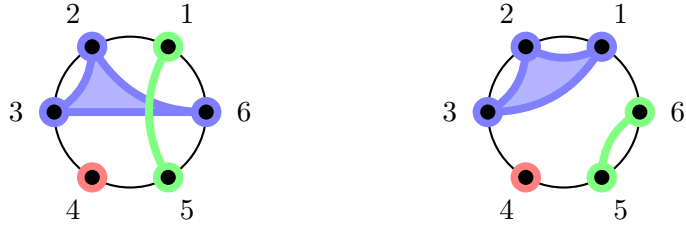
\begin{figure}
 	\begin{center} 
 		\begin{tikzpicture}
 			\draw[thick] (1,0) arc(0:360:1);
 			\filldraw[line width=3pt, draw= blue!50, fill=blue!30] (0:1) to[bend left] (2*60:1) to[bend left] (3*60:1) -- cycle;
 			\fill[blue!50] (0:1) circle(0.2);
 			\fill[blue!50] (2*60:1) circle(0.2);
 			\fill[blue!50] (3*60:1) circle(0.2);
 			\draw[line width=3pt, green!50] (60:1) to[bend right] (-60:1);
 			\fill[green!50] (60:1) circle(0.2);
 			\fill[green!50] (-60:1) circle(0.2);
 			\fill[red!50] (-120:1) circle(0.2);
 			\foreach \x in {1,...,6}
 			{\fill (\x*60:1) circle(0.1);
 				\draw (\x*60:1.5) node{$\x$};}
 		\end{tikzpicture}
 		\hspace{2cm}
 		 \begin{tikzpicture}
 			\draw[thick] (1,0) arc(0:360:1);
 			\filldraw[line width=3pt, draw= blue!50, fill=blue!30] (60:1) to[bend left] (2*60:1) to[bend left] (3*60:1) to[bend right] (60:1);
 			\fill[blue!50] (60:1) circle(0.2);
 			\fill[blue!50] (2*60:1) circle(0.2);
 			\fill[blue!50] (3*60:1) circle(0.2);
 			\draw[line width=3pt, green!50] (0:1) to[bend right] (-60:1);
 			\fill[green!50] (0:1) circle(0.2);
 			\fill[green!50] (-60:1) circle(0.2);
 			\fill[red!50] (-120:1) circle(0.2);
 			\foreach \x in {1,...,6}
 			{\fill (\x*60:1) circle(0.1);
 				\draw (\x*60:1.5) node{$\x$};}
 		\end{tikzpicture}
 	\end{center} 
 	\caption{Left: A partition of $\lbrace 1,2,\dots, 6\rbrace$ into 3 blocks (green, blue and red). We see that the segment joining the green points  and segments joining blue points do cross. Right: A non crossing partition.} 
 	\label{fig:noncrossing}
 \end{figure}
 As in \eqref{eq:classicalmomentstocumulants}, the relation \eqref{eq:deffreecumulants} can be inverted to express free cumulants in terms of free moments. One has $\kappa(a) = \varphi(a)$ and 
 \begin{align*}
 	\kappa(a,b) &= \varphi(ab)-\varphi(a)\varphi(b),\\ 
 	\kappa(a,b,c)&= \varphi(abc)- \varphi(a)\varphi(bc) - \varphi(b)\varphi(ca)  - \varphi(c)\varphi(ab)  + 2\varphi(a)\varphi(b)\varphi(c),
 \end{align*}
 in a way similar to the above classical moment-cumulant formula.
 However, at order 4,  the free cumulant $\kappa(a_1, a_2, a_3, a_4)$ no longer has the same expression as the classical cumulant $\mathbf K(X_1, X_2, X_3, X_4)$,  because there exists a crossing partition of order $4$: 
 \begin{center} 
 	\begin{tikzpicture}
 	\draw (1,0) arc(0:360:1);
 	\draw[line width=3pt, blue!50] (45:1) -- (45+2*90:1);
 	\fill[blue!50] (45:1) circle(0.2);
 	\fill[blue!50] (45+2*90:1) circle(0.2);
 	\draw[line width=3pt, green!50] (45+90:1) -- (45+3*90:1);
 	\fill[green!50] (45+90:1) circle(0.2);
 	\fill[green!50] (45+3*90:1) circle(0.2);
 	\fill (45:1) circle(0.1);
 	\fill (135:1) circle(0.1);
 	\fill ({45+2*90}:1) circle(0.1);
  	\fill ({45+3*90}:1) circle(0.1);
  	\draw (45:1.5) node{$1$};
  	\draw (135:1.5) node{$2$};
  	\draw ({45+2*90}:1.5) node{$3$};
  	\draw ({45+3*90}:1.5) node{$4$};
 \end{tikzpicture}
 \end{center} 
 Adapting the notion of independent variables to the non-commutative setting, elements $a_1,\, a_2\in\mathcal{A}$ are said to be respectively free iff all their mixed free cumulants vanish.
 We refer to \cite{Nica2006Lecture,speicher2019lecture} for more details on free probability.

\subsection{First moments of the stationary measure}
Let us go back to quantum exclusion processes and consider the case of open QSSEP with boundary parameters chosen so as to impose densities $n_a< n_b$ at both ends (the relation is  $n_a=\frac{\alpha_1}{\alpha_1+\beta_1}$ and $n_b=\frac{\alpha_N}{\alpha_N+\beta_N}$ as for the SSEP). We observe that in \eqref{eq:ItoforG} and \eqref{eq:boundaryconditionforG}, all terms are linear in the entries of the matrix $G$, so that one can compute the expectation under the stationary measure  $\mathbb E[G(i,j)]$ by imposing that the expectation of all such coefficients remain constant over time, leading to  solving a system of linear equations. In this section, all expectations are with respect to the stationary measure of the process $G_t$. One obtains  that $\mathbb E[G(i,j)]=0$ for $i\neq j$ and for $i=\lfloor xN\rfloor$  \cite{bernard2019open} 
$$ \mathbb E \left[ G(i,i)\right] = n_a+ x(n_b-n_a)+ O\left( \frac{1}{N}\right).$$
At second order, again, one can compute all the terms of the form $\mathbb E\left[G(i,j)G(k,\ell) \right]$ through a linear system. For $i=\lfloor xN\rfloor$,  $j=\lfloor yN\rfloor$, the dominant terms have the form $\mathbb E\left[G(i,j) G(j,i)\right]$ and one finds \cite{bernard2019open}, letting $\Delta n:=n_b-n_a$,
\begin{equation}
	\mathbb K\left[G(i,j), G(j,i)\right]   = \frac{(\Delta n)^2}{N} x(1-y) + O\left( \frac{1}{N^2}\right), \text{ for }x<y, 
	\label{eq:covariance}
\end{equation} 
and
\begin{align*}
	\mathbb K\left[G(i,i), G(i,i)\right]   &=  \frac{(\Delta n)^2}{N} x(1-x) + O\left( \frac{1}{N^2}\right), \\
	\mathbb K\left[G(i,i), G(j,j)\right]   &= -\frac{(\Delta n)^2}{N^2} x(1-y) + O\left( \frac{1}{N^3}\right), \text{ for }x<y,
\end{align*} 
where  $\mathbb K(X_1(\omega), \dots, X_n(\omega))$ denotes the classical cumulants of random variables $X_1, \dots, X_n$ with respect to the measure and expectation $\mathbb P, \mathbb E$. For example, \eqref{eq:covariance} is the covariance between random variables $G(i,j)$ and $G(j,i)$.  It is useful to note that the polynomial $x(1-y)$ may also be written as 
\begin{equation}
	x\wedge y-xy
	\label{eq:secondmoment}
\end{equation}
to remove the constraint that $x<y$.  Here $x\wedge y= \mathrm{min}(x,y)$. At third order, dominant terms have the form 
$$ \mathbb K\left[G(i,j), G(j,k) , G(k,i) \right]  = \frac{(\Delta n)^3}{N^2} x(1-2y)(1-z)+ O\left( \frac{1}{N^3}\right),\text{ for }x<y<z, $$
with $i=\lfloor xN\rfloor$,  $j=\lfloor yN\rfloor$ and  $k=\lfloor zN\rfloor$. Again, this  can be rewritten in a more symmetric way as 
\begin{equation}
	x\wedge y\wedge z- x(y\wedge z) - y(z\wedge x) - z(x\wedge y) + 2 xyz.
	\label{eq:thirdmoment}
\end{equation}
The formulas \eqref{eq:secondmoment} and \eqref{eq:thirdmoment} are reminiscent of expressions \eqref{eq:secondclassicalcumulant} and \eqref{eq:thirdclassicalcumulant} for classical cumulants in terms of moments. But as we will see, things become slightly different at the fourth order and higher where free probability comes into play. 

\subsection{General case and free probability}
Let us turn to the general case.  It is convenient to associate to a product of the form  $G(i_1,j_1)G(i_2,j_2) \dots G(i_r,j_r)$ an oriented graph $\mathcal G$ with edges $(i_k, j_k)$.  It turns out that very few graphs have a nontrivial average. We have 
$\mathbb E\left[ \mathcal G\right] =0 $
unless $\mathcal G$ is an Eulerian graph (at each vertex, the number of ingoing and outgoing arrows are the same). This follows from $U(1)^N$ invariance, i.e. we can change $c_j$ into $c_je^{\I\theta_j}$ so that $c_j^\dagger$ is changed into $c_j^\dagger e^{-\I\theta_j}$ without changing the probability distribution of $G(i,j)$. Taking the  $\theta_j$ as  independent  uniformly random phases, one can see that the expectation vanishes if the graph is not Eulerian. Again, all expectations in this section are with respect to the stationary measure for the process $G_t$.

We have already seen that 
$$ \mathbb E \left[\raisebox{-3.5ex}{\begin{tikzpicture}[scale=0.3] 
\draw[thick] (0,0) arc (-90:270:1);
\fill (0,0) circle(0.2);
\node[below] at (0,0) {$i$};
\end{tikzpicture}} \right]  =n_a+ x(n_b-n_a) + O\left( \frac{1}{N} \right).$$
At second order, we have seen in \eqref{eq:covariance} that (recall $\Delta n=n_b-n_a$), 
$$ \mathbb K \left[\raisebox{-1.5ex}{\begin{tikzpicture}[yscale=0.35, xscale=0.35] 
		\draw[thick] (1,0) arc (0:360:1);
		\fill (0:1) circle(0.2);
		\fill (180:1) circle(0.2);
		\node[left] at (180:1) {$i$};
		\node[right] at (0:1) {$j$};
\end{tikzpicture}} \right]  =\frac{(\Delta n)^2}{N} x(1-y)+ O\left( \frac{1}{N^2} \right), \text{ for }x<y,$$
and we recall that $\mathbb K$ denote the cumulants of the variables represented by the graph, i.e. 
$$ \mathbb K \left[\raisebox{-1.5ex}{\begin{tikzpicture}[yscale=0.35, xscale=0.35] 
		\draw[thick] (1,0) arc (0:360:1);
		\fill (0:1) circle(0.2);
		\fill (180:1) circle(0.2);
		\node[left] at (180:1) {$i$};
		\node[right] at (0:1) {$j$};
\end{tikzpicture}} \right]  =\mathbb E  \left[\raisebox{-1.5ex}{\begin{tikzpicture}[yscale=0.35, xscale=0.35] 
\draw[thick] (1,0) arc (0:360:1);
\fill (0:1) circle(0.2);
\fill (180:1) circle(0.2);
\node[left] at (180:1) {$i$};
\node[right] at (0:1) {$j$};
\end{tikzpicture}} \right]  
-
\mathbb E \left[\raisebox{-1.5ex}{\begin{tikzpicture}[yscale=0.35, xscale=0.35] 
\draw[thick] (1,0) arc (0:180:1);
\fill (0:1) circle(0.2);
\fill (180:1) circle(0.2);
\node[left] at (180:1) {$i$};
\node[right] at (0:1) {$j$};
\end{tikzpicture}} \right]  
\mathbb E \left[\raisebox{-1.5ex}{\begin{tikzpicture}[yscale=0.35, xscale=0.35] 
\draw[thick] (-1,0) arc (180:360:1);
\fill (0:1) circle(0.2);
\fill (180:1) circle(0.2);
\node[left] at (180:1) {$i$};
\node[right] at (0:1) {$j$};
\end{tikzpicture}} \right]. $$
%One can also verify that the case $i=j$ is of the same order: 
%$$  \mathbb K \left[\raisebox{-5ex}{\begin{tikzpicture}[scale=0.5] 
%		\draw[thick] (0,0) arc (-90:270:1);
%		\draw[thick] (0,0) arc (90:-270:1);
%\fill (0,0) circle(0.2);
%\end{tikzpicture}} \right] = O\left(\frac{1}{N}\right).$$
%However, for $i\neq j$,  we have 
%$$  \mathbb K \left[\raisebox{-5ex}{\begin{tikzpicture}[yscale=0.5, xscale=0.5] 
%\draw[thick] (0,0) arc (-90:270:1);
%\draw[thick] (3,0) arc (-90:270:1);
%\fill (0,0) circle(0.2);
%\fill (3,0) circle(0.2);
%\draw (0,0) node[below]{$i$};
%\draw (3,0) node[below]{$j$};
%\end{tikzpicture}} \right]  = O\left( \frac{1}{N^2} \right). $$
At third order, we have 
$$\mathbb K \left[\raisebox{-4ex}{\begin{tikzpicture}[yscale=0.35, xscale=0.35] 
		\draw[thick] (1,0) arc (0:360:1);
		\fill (30:1) circle(0.2);
		\fill (150:1) circle(0.2);
		\fill (270:1) circle(0.2);
		\node[below] at (270:1) {$z$};
		\node[above right] at (30:0.8) {$y$};
		\node[above left] at (150:0.8) {$x$};
\end{tikzpicture}} \right]  = \frac{(\Delta n)^3}{N^2} x(1-2y)(1-z) +O\left( \frac{1}{N^3} \right), \text{ for }x<y<z.$$
We claim that at any order $p$, the leading order terms are those corresponding to cyclic loop (and its pinchings). Let 
\begin{equation}
	g_p(x_1, \dots, x_p) :=\lim_{N\to\infty} N^{1-p} \, \mathbb K
	\left[ \raisebox{-7ex}{\begin{tikzpicture}[xscale=0.8,yscale=0.8]
			\draw[thick] (1,0) arc (0:360:1);
			\foreach \x in {1,2,...,10}
			{\fill (\x*36:1) circle(0.1);}
			\foreach \x in {1,2,...,4}
			\draw ({-\x*36+4*36}:1.4) node{$x_{\x}$};
			\draw ({6*36}:1.4) node[rotate=-50]{$\ldots$};
			\draw ({7*36}:1.4) node[rotate=-15]{$\ldots$};
			\draw ({8*36}:1.4) node[rotate=20]{$\ldots$};
			\draw ({9*36}:1.4) node[rotate=55]{$\ldots$};
			\draw ({5*36}:1.4) node[rotate=90]{$\ldots$};
			\draw (4*36:1.4) node{$x_p$};
	\end{tikzpicture}} \right], 
\label{eq:defg}
\end{equation} 
which we assume to be computed with all discrete indices $i_k$ distinct, although some of the $x_k$ may be equal in the limit. 

One can compute the cumulants of cyclic loops order by order \cite{bernard2019open,bernard2021solution}.
For example, when $p=4$ and $x_1<x_2<x_3<x_4$ we have
$$\mathbb K \left[\raisebox{-5ex}{\begin{tikzpicture}[yscale=0.5, xscale=0.5] 
		\draw[thick] (1.2,0) arc (0:360:1.2);
		\foreach \x in{0,1,2,3}
		{
		\fill (\x*90+45:1.2) circle(0.15);
		}
	\draw (135:1.7) node{$x_1$};
	\draw (135-90:1.7) node{$x_2$};
	\draw (135-180:1.7) node{$x_3$};
	\draw (135-260:1.7) node{$x_4$};
\end{tikzpicture}} \right]  = \frac{(\Delta n)^4}{N^3} x_1(1-3x_2-2x_3+5x_2x_3)(1-x_4) +O\left( \frac{1}{N^4} \right),$$
and 
$$\mathbb K \left[\raisebox{-5ex}{\begin{tikzpicture}[yscale=0.5, xscale=0.5] 
		\draw[thick] (1.2,0) arc (0:360:1.2);
		\foreach \x in{0,1,2,3}
		{
			\fill (\x*90+45:1.2) circle(0.15);
		}
		\draw (135:1.7) node{$x_1$};
		\draw (135-90:1.7) node{$x_3$};
		\draw (135-180:1.7) node{$x_2$};
		\draw (135-260:1.7) node{$x_4$};
\end{tikzpicture}} \right]  = \frac{(\Delta n)^4}{N^3} x_1(1-4x_2-x_3+5x_2x_3)(1-x_4) +O\left( \frac{1}{N^4} \right).$$
Note that, if the points $x_k$ are ordered on the line, these cumulants  then depend on the ordering on the loop. They are all polynomials in the $x_k$ with integer coefficients.

We now give a general expression for $g_p$ in terms of free cumulants. From now on we restrict to the case $n_a=0, n_b=1$, though the most general case is not much different. Let $[0,1]$ be equipped with the Lebesgue measure, denoted $\varphi$, and consider the variables $I_{x}:= \mathds{1}_{[0,x]}$. We have 
$$ \varphi(I_{x_1} \dots I_{x_p}) = x_1 \wedge \dots \wedge x_p.$$ 

\begin{claim} The function $g_p$ defined by \eqref{eq:defg} is a polynomial of degree $1$ in each variable. It is explicitly given by 
\begin{equation}
	g_p(x) = \kappa_p(I_{x_1}, \dots, I_{x_p}),
	\label{eq:formulagp}
\end{equation}
where $\kappa_p$ is the $p$-th free cumulant of the family of (commuting) variables $I_{x_1}, \dots, I_{x_p}$. 
\label{claim:freecumulantsg}
\end{claim}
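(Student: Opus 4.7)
The strategy is to derive a hierarchy of equations satisfied by the leading-order cyclic cumulants $g_p$ directly from the stationarity of the SDE \eqref{eq:ItoforG}, and then to identify this hierarchy with the non-crossing moment-cumulant relation \eqref{eq:deffreecumulants} defining the free cumulants of the indicators $I_{x_k}=\mathds{1}_{[0,x_k]}$.

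\textbf{Step 1 (Loop equations).} First I would apply It\^o's formula to the cyclic product
$$P(i_1,\dots,i_p) := G(i_1,i_2)G(i_2,i_3)\cdots G(i_p,i_1),$$
using \eqref{eq:ItoforG} and the boundary terms \eqref{eq:boundaryconditionforG}, and impose $\partial_t \EE[P]=0$ under the stationary measure. The unitary part contributes a discrete Laplacian acting on the indices $i_1,\dots,i_p$, while each It\^o contraction merges two edges of the cycle and produces a source term that is a product of expectations of shorter cyclic products (the \emph{pinchings} of the cycle at a shared vertex). With boundary data $n_a=0$ and $n_b=1$, this yields a recursive linear system for the stationary expectations of cyclic products in terms of shorter ones.

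\textbf{Step 2 (Scaling limit: moments are minima).} Passing to the scaling $i_k=\lfloor x_k N\rfloor$ and extracting the leading contribution of order $N^{1-p}$, one obtains a continuous version of the loop equation for the leading cyclic moment
$$m_p(x_1,\dots,x_p) := \lim_{N\to\infty} N^{1-p}\, \EE\bigl[G(i_1,i_2)\cdots G(i_p,i_1)\bigr].$$
The $U(1)^N$ invariance forces the surviving graphs to be Eulerian, and in the large-$N$ limit only planar pinchings contribute, so the source becomes a sum over splittings of the cycle into two shorter cycles glued at a single vertex. Solving the resulting quadratic recursion by induction on $p$, with base case $m_1(x)=x$ coming from $\EE[G(i,i)] = x + o(1)$, one obtains
$$m_p(x_1,\dots,x_p) = x_1 \wedge x_2 \wedge \dots \wedge x_p.$$

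\textbf{Step 3 (Identification with free cumulants).} Inverting the classical moment-cumulant relation for the cyclic product, the same planarity argument shows that only non-crossing partitions of $\{1,\dots,p\}$ (with respect to the cyclic order of the $i_k$) contribute at leading order, giving
$$m_p(x_1,\dots,x_p) = \sum_{\pi\in\mathrm{NC}(\mathcal P_p)}\prod_{B\in\pi} g_{|B|}\bigl((x_i)_{i\in B}\bigr).$$
Since the commutative variables $I_{x_k}$ on $([0,1],\varphi)$ satisfy $\varphi(I_{x_1}\cdots I_{x_p})=x_1\wedge\dots\wedge x_p$, the free moment-cumulant formula \eqref{eq:deffreecumulants} gives exactly the same identity with $g_{|B|}$ replaced by $\kappa_{|B|}$. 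By induction on $p$ (equivalently, by inverting on the non-crossing partition lattice), this forces $g_p(x_1,\dots,x_p) = \kappa_p(I_{x_1},\dots,I_{x_p})$. Multilinearity of $g_p$ in each variable follows from the same induction: the moments $x_1\wedge\dots\wedge x_p$ are degree one in each $x_k$ chamber by chamber, and the non-crossing inversion preserves this property at every stage.

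The main obstacle is the combinatorial reduction carried out in Steps 2 and 3: making rigorous the claim that crossing pinchings (and dually, crossing cumulant partitions) are subleading in $1/N$. This amounts to a planarity or genus-zero argument at the level of the loop equations, analogous to the derivation of Wigner's semicircle law from loop equations in random matrix theory, and requires careful bookkeeping of the powers of $1/N$ produced by the It\^o contractions when the cycle is resolved.
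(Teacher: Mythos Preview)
There is a genuine gap in Steps 2--3. With all discrete indices $i_1,\dots,i_p$ distinct, $U(1)^N$ invariance forces every classical cumulant $\mathbb K\bigl[(G(i_j,i_{j+1}))_{j\in B}\bigr]$ with $B\subsetneq\{1,\dots,p\}$ to vanish, because a proper subset of the edges of a cycle is never Eulerian. Hence your $m_p$ equals $g_p$ itself, not $x_1\wedge\dots\wedge x_p$: already at $p=2$ one has $m_2(x,y)=g_2(x,y)=x(1-y)=x\wedge y-xy$ for $x<y$. The non-crossing decomposition you write in Step~3 therefore collapses to a single term and gives no information.

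What is missing is that the non-crossing structure emerges only through \emph{coinciding} indices. The paper's route is: (i) establish the distributional identity $g^s(x)=\sum_{\pi\in\mathrm{NC}(\mathcal P_p)} g_\pi(x)\,\delta_{\pi^*}(x)$, where the delta functions on the Kreweras complement $\pi^*$ record the pattern of index coincidences and carry the $1/N$ weights; (ii) introduce the auxiliary object $\varphi(x,\tau):=\sum_{\pi\in\mathrm{NC}(\mathcal P_p)} g_\pi(x,\tau)$ \emph{without} delta functions, and show from the time-dependent evolution \eqref{eq:gs-evolution} (not from stationarity alone) that $\varphi$ satisfies the PDE \eqref{eq:EDPvarphi} whose stationary solution is $\min\{x_1,\dots,x_p\}$; (iii) compare with the free moment-cumulant relation. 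Your proposal conflates $g^s$, $g_p$ and $\varphi$; the passage between them is precisely where the work lies. The alternative stationarity-based approach of Biane, which the paper also records, does not follow your route either: it tracks how the polynomials $g_\sigma$ transform under transpositions of points \emph{adjacent on the interval} $[0,1]$ rather than on the cycle, leading to a different recursive system.
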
 
Claim \ref{claim:freecumulantsg} was first obtained in \cite{biane2021combinatorics}, based on  the combinatorial analysis of a recursive  characterization of the functions $g_p$, established in  \cite{bernard2021solution}. Finding a full proof of this characterization remains an open mathematical problem, however, the  deduction of \eqref{eq:formulagp} from the recursive characterization is a mathematical theorem. We will explain in Section \ref{sec:elementsproof} why free cumulants arise. We will not follow the combinatorial analysis of \cite{biane2021combinatorics} but rather present a more direct derivation from \cite{hruza2022coherent} based on free probability. 

\begin{remark}
The scaling \eqref{eq:defg} of the $g_p$'s with $1/N$  suggests a large deviation behaviour 
$$ \lim_{N\to\infty} \frac{1}{N} \log \mathbb E\left[ e^{N\tr(AG)}\right] = \mathcal F(A),$$
where the functional $\mathcal F$ is related to the polynomials $g_p$ \cite{bernard2023structured}.
\end{remark}

\begin{figure}
	\begin{tikzpicture}[xscale=0.75,yscale=0.75]
		\draw[thick] (2,0) arc(0:360:2);
		\filldraw[line width=3pt, draw= blue!50, fill=blue!30] (36:2) to[bend left] (2*36:2) to[bend left=20] (4*36:2) to[bend right=10] (36:2);
		\fill[blue!50] (36:2) circle(0.2);
		\fill[blue!50] (2*36:2) circle(0.2);
		\fill[blue!50] (4*36:2) circle(0.2);
		\draw[line width=3pt, green!50] (-36:2) to[bend right] (-3*36:2);
		\fill[green!50] (-36:2) circle(0.2);
		\fill[green!50] (-3*36:2) circle(0.2);
		\draw[line width=3pt, red!50] (-4*36:2) to[bend left=10] (0:2);
		\fill[red!50] (-4*36:2) circle(0.2);
		\fill[red!50] (0:2) circle(0.2);
		\fill[orange!50] (-2*36:2) circle(0.2);
		\fill[magenta!50] (3*36:2) circle(0.2);
		\fill[cyan!50] (5*36:2) circle(0.2);
		\foreach \x in {1,...,10}
		{\fill (\x*36:2) circle(0.1);
			\draw (\x*36:2.5) node{$\bar \x$};}
		%	\foreach \x in {1,...,10}
		%	{\draw[line width=1pt] (\x*36+18:1.9) -- (\x*36+18:2.1) ;}
		\draw[dashed, line width=1pt, gray!40] (2*36+18:2) to[bend left=40] (3*36+18:2);
		\draw[dashed, line width=1pt, gray!40] (4*36+18:2) to[bend left=40] (5*36+18:2);
		\draw[dashed, line width=1pt, gray!40] (0*36+18:2) to[bend left=10] (4*36+18:2);
		\draw[dashed, line width=1pt, gray!40] (0*36+18:2) to[bend right=10] (5*36+18:2);
		\draw[dashed, line width=1pt, gray!40] (-1*36+18:2) to[bend right=15] (-4*36+18:2);
		\draw[dashed, line width=1pt, gray!40] (-2*36+18:2) to[bend right=40] (-3*36+18:2);
	\end{tikzpicture}
	\hspace{2cm}
	\begin{tikzpicture}[xscale=0.75,yscale=0.75]
		\draw[thick] (2,0) arc(0:360:2);
		\begin{scope}[gray]
			\draw[line width=2pt, draw= gray!50, fill=gray!30] (36:2) to[bend left] (2*36:2) to[bend left=20] (4*36:2) to[bend right=10] (36:2);
			%\fill[gray!50] (36:2) circle(0.2);
			%	\fill[gray!50] (2*36:2) circle(0.2);
			%\fill[gray!50] (4*36:2) circle(0.2);
			\draw[line width=2pt, gray!50] (-36:2) to[bend right] (-3*36:2);
			%	\fill[gray!50] (-36:2) circle(0.2);
			%	\fill[gray!50] (-3*36:2) circle(0.2);
			\draw[line width=2pt, gray!50] (-4*36:2) to[bend left=10] (0:2);
			%	\fill[gray!50] (-4*36:2) circle(0.2);
			%	\fill[gray!50] (0:2) circle(0.2);
			%	\fill[gray!50] (-2*36:2) circle(0.2);
			%	\fill[gray!50] (3*36:2) circle(0.2);
			%	\fill[gray!50] (5*36:2) circle(0.2);
			\foreach \x in {1,...,10}
			{\fill[gray!50] (\x*36:2) circle(0.1);
				\draw[gray!50] (\x*36:2.5) node{$\bar \x$};}
		\end{scope}
		
		\foreach \x in {1,...,10}
		{\fill (\x*36+18:2) circle(0.1);
			\draw[] (\x*36+18:2.5) node{$\x$};}
		\draw[line width=2pt] (2*36+18:2) to[bend left=40] (3*36+18:2);
		\draw[line width=2pt] (4*36+18:2) to[bend left=40] (5*36+18:2);
		\draw[line width=2pt] (0*36+18:2) to[bend left=10] (4*36+18:2);
		\draw[line width=2pt] (0*36+18:2) to[bend right=10] (5*36+18:2);
		\draw[line width=2pt] (-1*36+18:2) to[bend right=15] (-4*36+18:2);
		\draw[line width=2pt] (-2*36+18:2) to[bend right=40] (-3*36+18:2);
	\end{tikzpicture}
	\caption{Left: a non crossing partition in $\mathcal P_{10}$. Right: Its Kreweras complement is depicted in black. 
		In applications to cyclic loop expectations in QSSEP or random matrix theory, the partition $\pi$ is partitioning the edges and its dual the points joining the edges into a loop (this is why the elements on the left are denoted $\bar 1, \bar 2, \dots$ instead of $1,2,\dots$).
		} 
	\label{fig:Kreweras}
\end{figure}
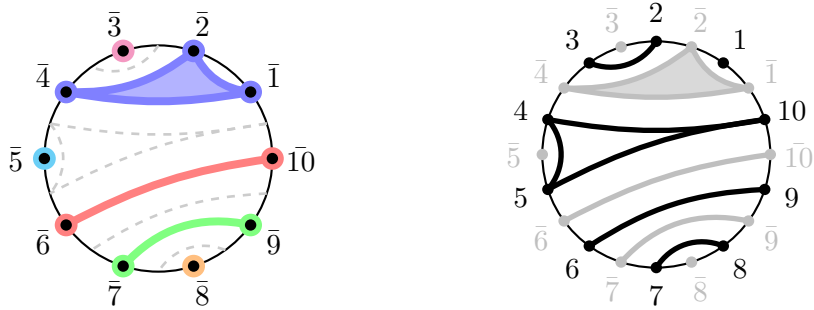

\subsection{Elements of proof}
\label{sec:elementsproof}

Let us explain now how free cumulants arise. First, we recall that the cyclic polynomials $g_p$ are defined in \eqref{eq:defg} as classical cumulants. Using \eqref{eq:classicalcumulantstomoments}, we have that 
$$ \mathbb E\left[G(i_1, i_2) \ldots G(i_p,i_1) \right]  = \sum_{\pi\in \mathcal P_p}  \prod_{B\in \pi} \mathbb K[(G(i_j, i_{j+1}))_{j\in B}].$$
We first argue that not all cumulants contribute in this sum, and that those which contribute are associated to non-crossing partitions. When $p=2$, 
$$\mathbb E  \left[\raisebox{-2.5ex}{\begin{tikzpicture}[yscale=0.5, xscale=0.5] 
		\draw[thick] (1,0) arc (0:360:1);
		\fill (0:1) circle(0.2);
		\fill (180:1) circle(0.2);
		\node[left] at (180:1) {$i$};
		\node[right] at (0:1) {$j$};
\end{tikzpicture}} \right]  = 
\underbrace{\mathbb K \left[G(i,j),G(j,i) \right]}_{N^{-1}g_2(x,y)}+ \underbrace{\mathbb K\left[G(i,j)\right] \mathbb K\left[G(j,i)\right] }_{\delta_{i,j} g_1(x)g_1(y)}. 
$$
At large scale, the indicator function $\delta_{ij}$ should be viewed as a discrete approximation of $N^{-1} \delta(x-y)$ so that as $N\to\infty$, 
$$\mathbb E  \left[\raisebox{-2.5ex}{\begin{tikzpicture}[yscale=0.5, xscale=0.5] 
		\draw[thick] (1,0) arc (0:360:1);
		\fill (0:1) circle(0.2);
		\fill (180:1) circle(0.2);
		\node[left] at (180:1) {$x$};
		\node[right] at (0:1) {$y$};
\end{tikzpicture}} \right] \approx N^{-1}\left(g_2(x,y) + \delta(x-y)g_1(x) g_1(y) \right),$$
to be understood in a weak sense. 
More generally let  us define the expectations (not the cumulants) of  cyclic loops
$$ g^s(x):=\lim_{N\to\infty} N^{p-1} \mathbb E\left[G(i_1, i_2) \ldots G(i_p,i_1)\right],$$
where $i_j= \lfloor Nx_j\rfloor$ as before and $x=(x_1,\cdots,x_p)$. 
	$U(1)^N$ invariance and scalings with $1/N$ imply the following
\begin{claim}[\cite{hruza2022coherent}] \label{claim:noncrossing}
	We have 
\begin{equation}
	g^s(x) = \sum_{\pi\in \mathrm{NC}(\mathcal P_p)} g_{\pi}(x) \delta_{\pi^*}(x),
	\label{eq:keyrelation}
\end{equation}
where the sum is over non-crossing partition $\pi$ and $g_{\pi}$ is the product of the cyclic cumulants associated to each parts of the partition $\pi$,
\begin{equation}
	g_{\pi}(x_1, \dots, x_p)= \prod_{B\in \pi } g_{\vert B\vert}(x_B), \;\;\;x_B=(x_j)_{j\in B},
	\label{eq:defgpi}
\end{equation}  
and the partition $\pi^\ast$ denotes the Kreweras complement of the non crossing partition $\pi$ (see Figure \ref{fig:Kreweras}) and $\delta_{\pi^*}(x)= \prod_{B\in \pi^*} \delta_{\vert B\vert}(x_B)$ where $\delta_n(x_1, \dots, x_n) := \delta(x_1-x_2)\delta(x_2-x_3) \dots \delta(x_{n-1}-x_n)$. 
\end{claim}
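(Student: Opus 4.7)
The plan is to start from the classical moment-cumulant inversion \eqref{eq:classicalcumulantstomoments} applied to the variables $X_j = G(i_j, i_{j+1})$ along the cycle, which gives
\begin{equation*}
\mathbb{E}\left[G(i_1, i_2) \cdots G(i_p, i_1)\right] = \sum_{\pi \in \mathcal{P}_p} \prod_{B \in \pi} \mathbb{K}\left[(G(i_j, i_{j+1}))_{j \in B}\right],
\end{equation*}
and then to cut this sum down to non-crossing partitions using the $U(1)^N$ invariance together with the scaling \eqref{eq:defg} of the cyclic cumulants. The first step is to show that each joint cumulant $\mathbb{K}[(G(i_j,i_{j+1}))_{j\in B}]$ vanishes unless the edges $\{(i_j,i_{j+1})\}_{j\in B}$ form an Eulerian sub-multigraph: the argument already given for the full loop transports to any block because under $c_k \mapsto e^{\I\theta_k}c_k$ each $G(a,b)$ picks up the phase $e^{\I(\theta_b-\theta_a)}$ and a joint cumulant, being multilinear, inherits the product of these phases on every summand, so averaging over independent uniform $\theta_k$'s forces the Eulerian condition.

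Next I would read off the leading-order $N$-behaviour of each surviving term. For a block $B$ of size $k$ whose Eulerian subgraph is a single directed cycle at generic (distinct) vertex values, the cumulant equals $N^{1-k} g_k(x_B) + o(N^{1-k})$ by the very definition \eqref{eq:defg}, while Eulerian subgraphs decomposing into several disjoint cycles produce disconnected contributions that are subleading by the standard large-$N$ factorization. Each Kronecker coincidence $\delta_{i_a, i_b}$ forced by the Eulerian constraints becomes $N^{-1}\delta(x_a - x_b)$ in the continuum. For a non-crossing edge-partition $\pi$ with $m = |\pi|$ blocks, the minimal set of vertex identifications compatible with all blocks being single Eulerian cycles is exactly the one described by the Kreweras complement $\pi^*$: geometrically, each block $B$ of edges on the outer cycle encloses a subset of the vertices $i_1, \dots, i_p$, and the Eulerian requirement on $B$ identifies them exactly as they are grouped by $\pi^*$. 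A short power-counting, using the identity $|\pi| + |\pi^*| = p+1$, shows that the product of $N^{m-p}$ (from cumulants) and $N^{-(p-|\pi^*|)}$ (from deltas) yields the expected scaling $N^{1-p}$, which after multiplying by the prefactor $N^{p-1}$ in the definition of $g^s$ produces precisely the summand $g_\pi(x)\,\delta_{\pi^*}(x)$.

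The main obstacle is ruling out the crossing partitions. The heuristic is that a crossing arrangement of edge-blocks cannot be made Eulerian-consistent with only the Kreweras-type number of vertex identifications: the interlocking of blocks forces extra coincidences, each contributing an additional $1/N$ factor that the $N^{p-1}$ rescaling cannot compensate. Turning this into a quantitative bound requires a careful combinatorial analysis relating the cyclic order of edges to the connectivity of the Eulerian sub-multigraphs on each block, and it is exactly at this step that the non-crossing structure characteristic of free probability enters. Once crossing partitions are shown to contribute at order $o(N^{1-p})$, summing the explicit leading contributions over $\pi \in \mathrm{NC}(\mathcal{P}_p)$ gives \eqref{eq:keyrelation}.
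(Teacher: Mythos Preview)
Your proposal is correct and follows essentially the same route as the paper: start from the classical moment--cumulant expansion of the loop expectation, use $U(1)^N$ invariance to impose the Eulerian constraint on each block, convert the resulting index coincidences into $N^{-1}\delta$ factors, and power-count to see that only non-crossing edge-partitions survive at order $N^{1-p}$. The paper illustrates this by writing out all partitions for $p=4$ and observing that the unique crossing term is $O(N^{-4})$ rather than $O(N^{-3})$, while you phrase the same computation through the identity $|\pi|+|\pi^*|=p+1$; these are two presentations of the same mechanism, and both leave the general crossing bound and the subleading nature of disconnected-cycle cumulants at the heuristic level, as the paper itself acknowledges.
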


It is useful to understand $\pi\in \mathrm{NC}(\mathcal P_p)$ as a partition of the edges and its dual $\pi^\ast$ as a partition of the points. For a loop the number of edges is equal to the number of points.
The identity \eqref{eq:keyrelation} should be understood in a weak sense, i.e. that for any continuous function $\psi :[0,1]^p \to\R$, 
$$ \int_{[0,1]^p}\mathrm dx_1 \dots\,\mathrm dx_p \; \psi(x) g^s(x) = \sum_{\pi\in \mathrm{NC}(\mathcal P_p)} \int_{[0,1]^p}\!\!\!\mathrm dx_1 \dots\,\mathrm dx_p \; \psi(x) g_{\pi}(x) \delta_{\pi^*}(x).$$
Alternatively, one can consider $\lim_{N\to\infty} N^{-1}\mathbb{E}[ \tr(G\Delta_1 \cdots G \Delta_p)]$, with $\Delta_1,\cdots,\Delta_p$ diagonal matrices, see Claim \ref{claim:main-eq}.

 Let us now explain the origin of non-crossing partitions and how \eqref {eq:keyrelation} is obtained for $p=4$. The fourth order cyclic moment can be expressed, by \eqref{eq:classicalcumulantstomoments}, as a sum over partitions of products of cumulants: 
\begin{multline*}
\mathbb E \left[\raisebox{-6ex}{\begin{tikzpicture}[xscale=0.7,yscale=0.7] 
		\draw[thick] (1.2,0) arc (0:360:1.2);
		\foreach \x in{0,1,2,3}
		{
			\fill (\x*90+45:1.2) circle(0.1);
		}
		\draw (135:1.6) node{$x_1$};
		\draw (135-90:1.6) node{$x_2$};
		\draw (135-180:1.6) node{$x_3$};
		\draw (135-260:1.6) node{$x_4$};
\end{tikzpicture}} \right] =
 \mathbb K \left[\raisebox{-6ex}{
 	\begin{tikzpicture}[xscale=0.7,yscale=0.7]
 		\draw[line width=5pt, blue!30] (1.2,0) arc (0:360:1.2);
 		\draw[line width=2pt, blue!30] (0:1.2) to[bend left] (90:1.2) to[bend left] (180:1.2) to[bend left] (270:1.2) to[bend left] (0:1.2);
		\draw[line width=1pt] (1.2,0) arc (0:360:1.2);
		\foreach \x in{0,1,2,3}
		{
			\fill (\x*90+45:1.2) circle(0.1);
		}
		\draw (135:1.6) node{$x_1$};
		\draw (135-90:1.6) node{$x_2$};
		\draw (135-180:1.6) node{$x_3$};
		\draw (135-260:1.6) node{$x_4$};
\end{tikzpicture}} \right] 
+ 
\mathbb K \left[\raisebox{-6ex}{\begin{tikzpicture}[xscale=0.7,yscale=0.7]
		\draw[line width=5pt, blue!30] (135:1.2) arc (135:405:1.2);
		\draw[line width=5pt, red!30] (45:1.2) arc (45:135:1.2);
		\draw[dashed, thick] (45:1.2) to[bend left] (135:1.2);
		\draw[line width=2pt, blue!30] (0:1.2) --  (180:1.1) to[bend left] (270:1.1) to[bend left] (0:1.1);
		\draw[line width=1pt] (1.2,0) arc (0:360:1.2);
		\foreach \x in{0,1,2,3}
		{
			\fill (\x*90+45:1.2) circle(0.1);
		}
		\draw (135:1.6) node{$x_1$};
		\draw (135-90:1.6) node{$x_2$};
		\draw (135-180:1.6) node{$x_3$};
		\draw (135-260:1.6) node{$x_4$};
\end{tikzpicture}} \right]
+ \raisebox{-2ex}{\begin{tikzpicture}[xscale=0.7,yscale=0.7]
	\draw[-stealth'] (0,0) arc(100:420:0.5);
\end{tikzpicture}}
 \\ 
% + \vskip 1.0 truecm 
~~~~~~ ~~~~~ ~~~~ + \mathbb K \left[\raisebox{-6ex}{\begin{tikzpicture}[xscale=0.7,yscale=0.7] 
 		\draw[line width=5pt, blue!30] (45:1.2) arc (45:-135:1.2);
 		 \draw[line width=5pt, green!30] (-135:1.2) arc (-135:-135-90:1.2);
 		\draw[line width=5pt, red!30] (45:1.2) arc (45:135:1.2);
 		\draw[dashed, thick] (45:1.2) to[bend left] (135:1.2) to[bend left] (225:1.2) -- (45:1.2); 
 		\draw[line width=2pt, blue!30]  (270:1.1) to[bend left] (0:1.1);
 		\draw[line width=1pt] (1.2,0) arc (0:360:1.2);
 		\foreach \x in{0,1,2,3}
 		{
 			\fill (\x*90+45:1.2) circle(0.1);
 		}
 		\draw (135:1.6) node{$x_1$};
 		\draw (135-90:1.6) node{$x_2$};
 		\draw (135-180:1.6) node{$x_3$};
 		\draw (135-260:1.6) node{$x_4$};
 \end{tikzpicture}} \right]
 + \raisebox{-2ex}{\begin{tikzpicture}[xscale=0.7,yscale=0.7]
 		\draw[-stealth'] (0,0) arc(100:420:0.5);
 \end{tikzpicture}}  + 
\mathbb K \left[\raisebox{-6ex}{\begin{tikzpicture}[xscale=0.7,yscale=0.7] 
\draw[line width=5pt, blue!30] (45:1.2) arc (45:-45:1.2);
\draw[line width=5pt, blue!30] (135:1.2) arc (135:135+90:1.2);
\draw[line width=5pt, green!30] (45:1.2) arc (45:135:1.2);
\draw[line width=5pt, red!30] (-45:1.2) arc (-45:-135:1.2);
\draw[dashed, thick] (45:1.2) to[bend left] (135:1.2); 
\draw[dashed, thick] (-45:1.2) to[bend right] (-135:1.2);
\draw[line width=2pt, blue!30]  (180:1.1) -- (0:1.1);
\draw[line width=1pt] (1.2,0) arc (0:360:1.2);
\foreach \x in{0,1,2,3}
{
	\fill (\x*90+45:1.2) circle(0.1);
}
\draw (135:1.5) node{$x_1$};
\draw (135-90:1.5) node{$x_2$};
\draw (135-180:1.5) node{$x_3$};
\draw (135-260:1.5) node{$x_4$};
\end{tikzpicture}} \right]
+ \raisebox{-2ex}{\begin{tikzpicture}[xscale=0.7,yscale=0.7]
\draw[-stealth'] (0,0) arc(100:420:0.5);
\end{tikzpicture}}
 \\ 
+ 
\mathbb K \left[\raisebox{-6ex}{\begin{tikzpicture}[xscale=0.7,yscale=0.7] 
		\draw[line width=5pt, blue!30] (-135:1.2) arc (-135:45:1.2);
		\draw[line width=5pt, red!30] (45:1.2) arc (45:{45+180}:1.2);
		\draw[dashed, thick] (45:1.2) -- (-135:1.2);
		\draw[line width=2pt, blue!30] (270:1.1) to[bend left] (0:1.1);
		\draw[line width=2pt, red!30] (90:1.1) to[bend left] (180:1.1);
		\draw[line width=1pt] (1.2,0) arc (0:360:1.2);
		\foreach \x in{0,1,2,3}
		{
			\fill (\x*90+45:1.2) circle(0.1);
		}
		\draw (135:1.6) node{$x_1$};
		\draw (135-90:1.6) node{$x_2$};
		\draw (135-180:1.6) node{$x_3$};
		\draw (135-260:1.6) node{$x_4$};
\end{tikzpicture}} \right]
+  \raisebox{-2ex}{\begin{tikzpicture}[xscale=0.7,yscale=0.7]
		\draw[-stealth'] (0,0) arc(100:420:0.5);
\end{tikzpicture}}
+
\mathbb K \left[\raisebox{-6ex}{\begin{tikzpicture}[xscale=0.7,yscale=0.7]
		\draw[line width=5pt, blue!30] (45:1.2) arc (45:135:1.2);
		\draw[line width=5pt, blue!30] (225:1.2) arc (225:{225+90}:1.2);
		\draw[line width=5pt, red!30] (135:1.2) arc (135:{135+90}:1.2);
		\draw[line width=5pt, red!30] (-45:1.2) arc (-45:45:1.2);
		%\draw[dashed, thick] (45:1.2) -- (-135:1.2);
		\draw[dashed, thick]  (135:1.2) to[bend left] (45+180:1.2) ;
		\draw[dashed, thick] (45+3*90:1.2) to[bend left]  (45:1.2) ;
		\draw[line width=2pt, blue!30] (90:1.1) -- (-90:1.1);
		\draw[line width=2pt, red!30] (0:1.1) -- (180:1.1);
		\draw[line width=1pt] (1.2,0) arc (0:360:1.2);
		\foreach \x in{0,1,2,3}
		{
			\fill (\x*90+45:1.2) circle(0.1);
		}
		\draw (135:1.6) node{$x_1$};
		\draw (135-90:1.6) node{$x_2$};
		\draw (135-180:1.6) node{$x_3$};
		\draw (135-260:1.6) node{$x_4$};
\end{tikzpicture}} \right]
+ 
\mathbb K \left[\raisebox{-6ex}{\begin{tikzpicture}[xscale=0.7,yscale=0.7] 
		\draw[line width=5pt, blue!30] (45:1.2) arc (45:135:1.2);
		\draw[line width=5pt, red!30] (225:1.2) arc (225:{225+90}:1.2);
		\draw[line width=5pt, green!30] (135:1.2) arc (135:{135+90}:1.2);
		\draw[line width=5pt, orange!30] (-45:1.2) arc (-45:45:1.2);
		\draw[dashed, thick] (45:1.2) to[bend left] (135:1.2) to[bend left] (45+180:1.2) to[bend left] (45+3*90:1.2) to[bend left]  (45:1.2) ;
%		\draw[line width=2pt, blue!30] (90:1.1) -- (-90:1.1);
%		\draw[line width=2pt, red!30] (0:1.1) -- (180:1.1);
		\draw[line width=1pt] (1.2,0) arc (0:360:1.2);
		\foreach \x in{0,1,2,3}
		{
			\fill (\x*90+45:1.2) circle(0.1);
		}
		\draw (135:1.6) node{$x_1$};
		\draw (135-90:1.6) node{$x_2$};
		\draw (135-180:1.6) node{$x_3$};
		\draw (135-260:1.6) node{$x_4$};
\end{tikzpicture}} \right]
\end{multline*}
where the symbol $\circlearrowleft$ denote similar terms obtained by rotations of the preceding term. Each term in the expansion corresponds to a different partition of $\lbrace 1, 2,3,4\rbrace$. Since the random variables $G(i,j)$ are attached to the edges of the graph, we partition the edge set, this is why the pictures are somewhat different between Figure \ref{fig:noncrossing} and Figure \ref{fig:Kreweras}.  Then, each term in the expansion  should be understood as a product of cumulants, each cumulant corresponding to a block in the partition (i.e. a colour). To enforce $U(1)^N$ invariance, these cumulants are multiplied by (a product of) delta functions $N^{-1}\delta(x_i-x_j)$ whenever there is a dashed line joining $x_i$ and $x_j$. However, since cyclic cumulants of order $p$ are $O(N^{1-p})$ and delta terms are of order $N^{-1}$, we see that all terms in the expansion above are of order $N^{-3}$ except for the term 
$$\mathbb K \left[\raisebox{-6ex}{\begin{tikzpicture}[xscale=0.7,yscale=0.7]  
		\draw[line width=5pt, blue!30] (45:1.2) arc (45:135:1.2);
		\draw[line width=5pt, blue!30] (225:1.2) arc (225:{225+90}:1.2);
		\draw[line width=5pt, red!30] (135:1.2) arc (135:{135+90}:1.2);
		\draw[line width=5pt, red!30] (-45:1.2) arc (-45:45:1.2);
		%\draw[dashed, thick] (45:1.2) -- (-135:1.2);
		\draw[dashed, thick]  (135:1.2) to[bend left] (45+180:1.2) ;
		\draw[dashed, thick] (45+3*90:1.2) to[bend left]  (45:1.2) ;
		\draw[line width=2pt, blue!30] (90:1.1) -- (-90:1.1);
		\draw[line width=2pt, red!30] (0:1.1) -- (180:1.1);
		\draw[line width=1pt] (1.2,0) arc (0:360:1.2);
		\foreach \x in{0,1,2,3}
		{
			\fill (\x*90+45:1.2) circle(0.1);
		}
		\draw (135:1.6) node{$x_1$};
		\draw (135-90:1.6) node{$x_2$};
		\draw (135-180:1.6) node{$x_3$};
		\draw (135-260:1.6) node{$x_4$};
\end{tikzpicture}} \right]\sim N^{-4} g(x_1,x_2)g(x_3,x_4) \delta(x_2-x_3)\delta(x_1-x_4),$$
which is subdominant. More generally, terms corresponding to partitions with crossings are always subdominant. This explains why \eqref{eq:keyrelation} holds.

The rest of the argument of the Claim \ref{claim:freecumulantsg}  goes as follows \cite{hruza2022coherent}: 
\begin{enumerate}[leftmargin=2em]
	\item First one looks at the dynamical equations satisfied by expectation values of cyclic loops, $g^s(x,\tau):=\lim_{N\to\infty} N^{p-1} \mathbb E\left[G_{N^2\tau}(i_1, i_2) \ldots G_{N^2\tau}(i_p,i_1)\right]$, in the large $N$ scaling limit, with  $i_j= \lfloor Nx_j\rfloor$, $t=N^2\tau$. Here, the expectation is not taken with respect to the stationary measure, but with respect to the Brownian motions involved in the dynamics.  One can show that 
	\begin{equation}\label{eq:gs-evolution}
	 (\partial_\tau-\Delta) g^s(x,\tau) = 2\sum_{i<j } \partial_{x_i}\partial_{x_j} \left( \delta(x_i-x_j) g^s(x,\tau|\sigma_i)g^s(x,\tau|\sigma_j) \right),
	 \end{equation}
	where $\Delta:=\sum_{j=1}^p \partial_{x_j}^2$ and $g^s(x,\tau|\sigma_i)$ (resp. $g^s(x,\tau|\sigma_j)$) are the expectation values of the smaller loops $\sigma_i$, containing $x_i$ but not $x_j$, (resp. $\sigma_j$, containing $x_j$ but not $x_i$) obtained by splitting the initial loop as follows  
		\begin{center}
			\begin{tikzpicture}[xscale=0.7,yscale=0.7]  
				\draw[red!50, line width=2pt] (36:2) -- (0:2) -- (-36:2) -- (-2*36:2) -- (-3*36:2) -- (-4*36:2) -- cycle;
				\draw[red!50, line width=2pt] (5*36:2) -- (4*36:2) -- (3*36:2) -- (2*36:2) -- cycle;
				\draw[thick] (2,0) arc (0:360:2);
				\foreach \x in {1,2,...,10}
				{\fill (\x*36:2) circle(0.1);}
				\draw (36:2.5) node{$x_j$}; 
				\draw (0:2.8) node{$x_{j+1}$}; 
				\draw (-5*36:2.5) node{$x_{i}$}; 
				\draw (-4*36:2.5) node{$x_{i-1}$}; 
				\draw (4*36:2.5) node{$x_{i+1}$};
				\draw (3*36:2.5) node[rotate=15]{\ldots}; 
				\draw (-36:2.5) ; %node{$x_{\sigma^2(j)}$}; 
				\draw (-2*36:2.5) node[rotate=20]{\ldots};     
				\draw (2*36:2.5) node{$x_{j-1}$};  
				\draw[red] (3*36+18:1.5) node{$\sigma_i$};
				\draw[red] (-36-18:0.8) node{$\sigma_j$};
				\end{tikzpicture}
		\end{center}
The evolution for these $g^s$ has therefore a triangular structure.

	\item Given the expectation values $g^s(x,\tau)$, one defines the cyclic cumulants $g_p(x,\tau)$ using  \eqref{eq:keyrelation} and $g_\pi(x,\tau)=\prod_{B\in\pi}g_{|B|}(x,\tau)$  for any non-crossing partition. Then, mimicking the moment-cumulant formula \eqref{eq:deffreecumulants} in the free probability context, one defines 
	$$ \varphi(x,\tau) := \sum_{\pi\in \mathrm{NC}(\mathcal P_p)} g_\pi(x,\tau),$$
	Compare to \eqref{eq:keyrelation}, this formula does not contain the Dirac delta-function associated to the Kreweras of $\pi$. Then, 
	one shows using the type of reasoning above and (combinatorial) properties of non-crossing partitions that we have 
	\begin{equation}
		(\partial_t-\Delta) \varphi(x,\tau) = 2\sum_{i<j } \delta(x_i-x_j) (\partial_{x_i}\varphi(x,\tau|\sigma_i))(\partial_{x_j}\varphi(x,\tau|\sigma_j)).
		\label{eq:EDPvarphi}
	\end{equation}
	where $\varphi(x,\tau|\sigma_i)$ (resp. $\varphi(x,\tau|\sigma_j)$) is defined as above by breaking the initial loop in two pieces $\sigma_i$ and $\sigma_j$ and restricting $\varphi(x,\tau)$ to the smaller loop $\sigma_i$ (resp. $\sigma_j$).
	\item Finally, one checks that $\varphi(x) = \min\lbrace x_1, \dots, x_p\rbrace$ is the stationary solution of \eqref{eq:EDPvarphi} with appropriate boundary conditions. 
\end{enumerate} 
This implies that $g_\pi(x)$ in the stationary measure are the free cumulants associated to moments given by the function $\varphi(x)$, that is the free cumulants of the variables $I_{x}$ as in Claim \ref{claim:freecumulantsg}.

\begin{remark}
The proof of the Claim \ref{claim:freecumulantsg} in \cite{biane2021combinatorics} is different from the argument above. It starts from the stationarity conditions for the cumulants of cyclic loops \cite{bernard2019open}. To write them we need an alternative notation.  Until now we ordered the points on the loops. Instead we could order them on the interval $[0,1]$, so that $ 0\leq x_1 < x_2 < \dots < x_p\leq 1 $ and consider cumulants (with respect to the stationary measure for the process $G_t$)
$$ g_{\sigma}(x) = \mathbb K
\left[ \raisebox{-8.5ex}{\begin{tikzpicture}[xscale=0.8,yscale=0.8]
\draw[thick] (1.2,0) arc (0:360:1.2);
\foreach \x in {1,2,...,10}
{\fill (\x*36:1.2) circle(0.1);}
\foreach \x in {1,2,...,4}
\draw ({-\x*36+4*36}:1.7) node{\footnotesize $x_{\sigma^{\x}(1)}$};
\draw (4*36:1.7) node{$x_{1}$};
\draw ({5*36}:1.6) node[rotate=90]{$\ldots$};
\draw ({6*36}:1.6) node[rotate=-50]{$\ldots$};
\draw ({7*36}:1.6) node[rotate=-15]{$\ldots$};
\draw ({8*36}:1.6) node[rotate=20]{$\ldots$};
\draw ({9*36}:1.6) node[rotate=55]{$\ldots$};
\end{tikzpicture}} \right], $$
for any cyclic permutation $\sigma \in \mathcal S_p$.
Those cyclic cumulants are polynomials  \cite{bernard2019open}. Given $\sigma \in \mathcal S_p$, and for any pair of adjacent variables $x_j$ and $x_{j+1}$, $g_{\sigma}(x)$ can be decomposed as 
$$ g_{\sigma}(x) = A_j(\sigma) + B_j(\sigma)x_j + C_j(\sigma) x_{j+1} + D_j(\sigma) x_jx_{j+1} .$$
where the coefficients $A_j,B_j,C_j,D_j$ may depend on all variables $x_k$ but not on $x_j$ and $x_{j+1}$. The stationarity conditions on $g_\sigma$ then translate into conditions on those coefficients~\cite{bernard2021solution}
\begin{align*}
A_j(\tau_j\circ\sigma) &= A_j(\sigma),\\
B_j(\tau_j\circ\sigma) &= C_j(\sigma) + \partial_{x_j}g_{\sigma_j^-}(x)\,\partial_{x_{j+1}}g_{\sigma_j^+}(x), \\
C_j(\tau_j\circ\sigma) &= B_j(\sigma) -\partial_{x_j}g_{\sigma_j^-}(x)\, \partial_{x_{j+1}}g_{\sigma_j^+}(x), \\
D_j(\tau_j\circ\sigma) &= D_j(\sigma), 
\end{align*}
where $\tau_j\in \mathcal S_p$ is the transposition exchanging $j$ and $j+1$,  and $\tau_j\circ\sigma$ denotes the composition of the two permutations $\tau_j$ and $\sigma$. Similarly as in \eqref{eq:gs-evolution}, the cyclic permutations $\sigma_j^\pm$ are defined by breaking the initial cyclic permutation $\sigma$ into two smaller cyclic permutations. Namely, let $r,s$ be such that $\sigma^{r}(j)=j+1$ and $\sigma^s(j+1)=j$. Then, the cyclic permutations $\sigma_j^- $ and $\sigma_j^+$ are defined in such a way that the cyclic order induced by $\sigma$ in the $r$ variables $x_j, x_{\sigma(j)}, \dots , x_{\sigma^{r-1}(j)}$ is encoded by $\sigma_j^-$ while the cyclic order induced by $\sigma$ in the $s$ variables $x_{j+1}, x_{\sigma(j+1)}, \dots, x_{\sigma^{s-1}(j+1)}$ is encoded by $\sigma_j^+$. Pictorially, 
		\begin{center}
			\begin{tikzpicture}[xscale=0.75,yscale=0.75]
				\draw[red!50, line width=2pt] (36:2) -- (0:2) -- (-36:2) -- (-2*36:2) -- (-3*36:2) -- (-4*36:2) -- cycle;
				\draw[red!50, line width=2pt] (5*36:2) -- (4*36:2) -- (3*36:2) -- (2*36:2) -- cycle;
				\draw[thick] (2,0) arc (0:360:2);
				\foreach \x in {1,2,...,10}
				{\fill (\x*36:2) circle(0.1);}
				\draw (36:2.5) node{$x_j$}; 
				\draw (0:2.5) node{$x_{\sigma(j)}$}; 
				\draw (-5*36:2.5) node{$x_{j+1}$}; 
				\draw (-4*36:2.5) node{$x_{\sigma^{r-1}(j)}$}; 
				\draw (4*36:2.5) node{$x_{\sigma(j+1)}$};
				\draw (3*36:2.5) node[rotate=15]{\ldots}; 
				\draw (-36:2.5) node{$x_{\sigma^2(j)}$}; 
				\draw (-2*36:2.5) node[rotate=20]{\ldots};     
				\draw (2*36:2.5) node{$x_{\sigma^{s-1}(j+1)}$};  
				\draw[red] (3*36+18:1.5) node{$\sigma_j^+$};
				\draw[red] (-36-18:0.8) node{$\sigma_j^-$};
				\end{tikzpicture}
		\end{center}
The above equations admit a unique solution. The proof in \cite{biane2021combinatorics} consists in solving them combinatorially, and understanding their free probability interpretation.
\end{remark}

\section{Ensembles of structured random matrices}	
	\label{sec:structured}
	
	 In this section, following \cite{bernard2023structured,bernard2025addition} we describe a possible general framework for dealing with a large class of structured random matrices. In random matrix theory, the probability distribution of matrix entries is often invariant under permutations of the indices. This is the case of Wigner matrices for instance.  Another example which is important for us  is a Haar distributed orbit $M=UDU^*$, with $D$ some fixed diagonal matrix and  $U$ is unitary Haar distributed. In both cases, the law of $M$ is $U(N)$ invariant, so it is a fortiori permutation invariant. This implies that quantities such as 
	$$ \mathbb E\left[ M_{i_1,j_1} \cdots M_{i_p,j_p}\right] $$ 
	depend on the indices only through the topology of the associated graph. For example, in the case of a random matrix $M=UDU^*$, the expectation values of cyclic loops, with all indices $i_1, \dots, i_p$ distinct, behave as 
	$$ \mathbb E\left[ M_{i_1,i_2} \cdots M_{i_p,i_1}\right]  = \frac{1}{N^{p-1}} \kappa_p(\mu_D) + O(N^{-p}),$$
	where $\kappa_p(\mu_D) $ denote the free cumulants of the limiting spectral distribution of the diagonal matrix $D$, as the size $N$ goes to infinity. This expression, and in particular its scaling, ressembles those we encounter in the previous Section \ref{sec:openQSSEP}, except that in QSSEP those expectation values were depending explicitly on the points $x_k=i_k/N$.
We are interested in this Section in a generalization of these ensembles of random matrices for \emph{structured} matrices whose law is not invariant under permutations. 
	 Similar random matrix ensembles are considered in the context of the Eigenstate Thermalization Hypothesis \cite{foini2018eigenstate, pappalardi2022eigenstate}. The class of random matrices we consider is however different from \cite{handel2016structured} who also employs the phrase ``structured random matrices''.
	\subsection{Cyclic cumulants and free probability}
	Let us describe the ensembles of random matrices introduced in \cite{bernard2023structured}. 
	\begin{assumption}
				\label{assumption}
				 Let $\mathbb P_N$  be a sequence of probability measures on $N\times N$ matrices $M_N$ satisfying the following properties (we will often drop the subscript $N$ and write simply the measure $\mathbb P$ and  the matrix $M$): 
		\begin{enumerate}[leftmargin=2em]
			\item ($U(1)^N$ invariance) For any diagonal matrix $u\in U(1)^N$, i.e. $u=\mathrm{diag}(e^{\I\theta_1}, \dots, e^{\I\theta_N})$, $\theta_j\in \R$, 
			$$ M\overset{(d)}{=} uMu^*.$$
			\item (Scaling of cyclic cumulants) For all $p\geq 1$, there exists a continuous and bounded function $g_p(x_1, \dots, x_p)$ such that if $i_j=i_j(N)$ are distinct integers chosen so that $i_j(N)/N$ converges to $x_j$, 
			\begin{equation} \label{ex:def-gp}
			  \mathbb E\left[M_{i_1,i_2} ,\cdots ,M_{i_p,i_1} \right] = \frac{1}{N^{p-1}} g_p(x_1, \dots, x_p) + O\left(\frac{1}{N^p}\right).
			  \end{equation}
			  \item (Pinching of loops) If in the sequences $(i_1,\cdots,i_p)$ some of the indices coincide, e.g. $i_n(N)= i_m(N)$ for $n\not= m$, then
						$$\lim_{N\to\infty} N^{p-1} \mathbb K\left[M_{i_1,i_2}, \cdots ,M_{i_p,i_1} \right]= g_p(x),$$
			\item (Scaling of disconnected cycles) The cumulant associated to $r$ disjoint cycles with a total of $n$ matrix elements is of order $O(N^{2-r-n})$. More precisely, using the graphical notations that we have set up in Section \ref{sec:openQSSEP}, 
\begin{equation}
	\mathbb K\left[ \raisebox{-3ex}{\begin{tikzpicture}[yscale=0.5, xscale=0.5] 
			\draw[thick] (1.2,0) arc (0:360:1.2);
			\foreach \x in{0,1,2,3}
			{
				\fill (\x*90+45:1.2) circle(0.15);
			}
			\begin{scope}[xshift=3cm]
				\draw[thick] (1.2,0) arc (0:360:1.2);
				\foreach \x in{0,1,...,5}
				{
					\fill (\x*360/5+45:1.2) circle(0.15);
				}
			\end{scope}
			\begin{scope}[xshift=6cm]
				\draw(0,0) node{$\dots$};
			\end{scope}
			\begin{scope}[xshift=9cm]
				\draw[thick] (1.2,0) arc (0:360:1.2);
				\foreach \x in{0,1,2,3}
				{
					\fill (\x*360/3+45:1.2) circle(0.15);
				}
			\end{scope}
	\end{tikzpicture}}  \right]  = O(N^{2-r-n}),
\label{eq:scalingdisjointcycles}
\end{equation} 
in the sense that there exists a constant $C_{n,r}$ such that all such cumulants are bounded by  $C_{n,r}N^{2-r-n}$.
		\end{enumerate} 
	\end{assumption} 

The property \textit{(4)} is stronger than the corresponding property in the original reference \cite{bernard2023structured}, this new version was introduced in a later addition \cite{bernard2025addition}.	In \textit{(2)}, because of $U(1)^N$ invariance, we can replace the expectation values by the cumulants since we assume that all indices are distinct. Thus, 
using the graphical notation, the function $g_p$ defined in \eqref{ex:def-gp} are the scaled cumulants of cyclic loops, 
\begin{equation} \label{eq:local-cumulant}
	g_p(x_1, \dots, x_p) =\lim_{N\to\infty} N^{1-p} \, \mathbb K
	\left[ \raisebox{-6.5ex}{\begin{tikzpicture}[xscale=0.7,yscale=0.7]
			\draw[thick] (1,0) arc (0:360:1);
			\foreach \x in {1,2,...,10}
			{\fill (\x*36:1) circle(0.1);}
			\foreach \x in {1,2,...,4}
			\draw ({-\x*36+4*36}:1.4) node{$x_{\x}$};
			\draw ({6*36}:1.4) node[rotate=-50]{$\ldots$};
			\draw ({7*36}:1.4) node[rotate=-15]{$\ldots$};
			\draw ({8*36}:1.4) node[rotate=20]{$\ldots$};
			\draw ({9*36}:1.4) node[rotate=55]{$\ldots$};
			\draw ({5*36}:1.4) node[rotate=90]{$\ldots$};
			\draw (4*36:1.4) node{$x_p$};
	\end{tikzpicture}} \right]. 
	\end{equation}
These $g_p$ are called \emph{local free cumulants}, since they have a natural interpretation in conditioned free probability (see Remark \ref{rem:freeconditioned} below).

The general approach to structured random matrices developed below is based on the moments method. This is why  properties \textit{(2)}, \textit{(3)} and \textit{(4)} in Assumptions \ref{assumption} concern moments and cumulants. Consequently, a number of statements have to be interpreted as formal power series, as we stress below. In random matrix theory, it is often necessary to impose some additional assumptions on the operator norm of the involved matrices. Estimating the operator norm of matrices from minimal assumptions is typically a difficult problem \cite{haagerup2005new}  -- see also \cite{hayes2020random, belinschi2022strong, bordenave2023norm, vanhandel2025strong} for recent developments around the Peterson-Thom conjecture. In these notes, we do not discuss such  assumptions.

\begin{remark}
Since these properties are formulated in terms of cumulants, they are preserved by shifting the matrices $M$ by diagonal matrices, $M_{ij}\to M_{ij}-\delta_{ij} a_i$. Similarly these properties are preserved by multiplying $M$ by a diagonal matrix, on the left or on the right, that is under transformation $M\to \Delta_L M \Delta_R$ with $\Delta_{L}, \Delta_R$ being diagonal matrices.
\end{remark}

\begin{remark}
As already mentioned, Assumptions \ref{assumption} are fulfilled by Wigner matrices (possibly with a variance profile \cite{girko2012theory, shlyakhtenko1996random, guionnet2000large}) or Haar distributed orbits. It is believed that they are also satisfied in QSSEP, although a proof is missing (the proof of the scaling property \textit{(4)} is lacking). 
\end{remark}	

\begin{remark} Defining an ensemble of structured random matrices requires the data of its local free cumulants \eqref{eq:local-cumulant}. It is an apparently difficult problem to characterize which set of functions $g_p$ forms a possible set of local free cumulants. 
\end{remark}

	\begin{example}
The scalings \eqref{eq:scalingdisjointcycles} imply for example that 
$$  \mathbb K \left[\raisebox{-5ex}{\begin{tikzpicture}[yscale=0.5, xscale=0.5] 
		\draw[thick] (0,0) arc (-90:270:1);
		\draw[thick] (3,0) arc (-90:270:1);
		\fill (0,0) circle(0.2);
		\fill (3,0) circle(0.2);
		\draw (0,0) node[below]{$i$};
		\draw (3,0) node[below]{$j$};
\end{tikzpicture}} \right]  = O\left(N^{2-2-2}\right) = O\left( N^{-2} \right) $$
but 
$$ \mathbb K \left[\raisebox{-2ex}{\begin{tikzpicture}[yscale=0.5, xscale=0.5] 
		\draw[thick] (1,0) arc (0:360:1);
		\fill (0:1) circle(0.2);
		\fill (180:1) circle(0.2);
		\node[left] at (180:1) {$i$};
		\node[right] at (0:1) {$j$};
\end{tikzpicture}} \right]  = O\left( N^{2-1-2}\right) = O\left(N^{-1}\right).$$
Comparing with Section \ref{sec:openQSSEP}, we see that these scaling properties are satisfied in QSSEP.
\end{example}

	More generally, the scalings in Assumptions \ref{assumption} imply that $\tr(M)$ concentrates fast around its mean:  we have that (as a formal power series in $z$)
	\begin{equation}
		\lim_{N\to\infty} N^{-1} \log \mathbb E\left[ e^{z \tr(M)} \right] = \lim_{N\to\infty} N^{-1} z \mathbb E\left[\tr(M)\right],
		\label{eq:selfaveraging}
	\end{equation} 
in the sense that  all cumulants are negligible compared to the first one as $N$ goes to infinity.
Indeed, let us consider the first two cumulants. The first one is 
	$$ \mathbb K\left[ \tr(M)\right] = \sum_{i=1}^N \mathbb K\left[ \raisebox{-4ex}{\begin{tikzpicture}[scale=0.5] 
			\draw[thick] (0,0) arc (-90:270:1);
			\fill (0,0) circle(0.2);
			\node[below] at (0,0) {$i$};
	\end{tikzpicture}}\right]  \sim N \int_0^1 \mathrm \!\!dx\, g_1(x),$$
while the second cumulant is 
\begin{align*}
				\mathbb K\left[ \tr(M),\tr(M)\right] &= \sum_{1\leq i\neq j\leq N} \mathbb K\left[\raisebox{-5ex}{\begin{tikzpicture}[yscale=0.5, xscale=0.5] 
						\draw[thick] (0,0) arc (-90:270:1);
						\draw[thick] (3,0) arc (-90:270:1);
						\fill (0,0) circle(0.2);
						\fill (3,0) circle(0.2);
						\draw (0,0) node[below]{$i$};
						\draw (3,0) node[below]{$j$};
				\end{tikzpicture}} \right] + \sum_{i=1}^N \;\;\; \mathbb K\left[ \raisebox{-5ex}{\begin{tikzpicture}[scale=0.5] 
				\draw[thick] (0,0) arc (-90:270:1);
				\draw[thick] (0,0) arc (90:-270:1);
				\fill (0,0) circle(0.2);
			\end{tikzpicture}}  \right]\\ 
				&=N^2 \times O\left(N^{-2}\right) + N \times O\left(N^{-1}\right) = O(1).			
\end{align*}
It is easy to check using Assumptions \ref{assumption} that higher cumulants are sub-leading in $1/N$. For such matrices, non-crossing partitions play a role similar as in QSSEP. We have:

\begin{claim}  \label{claim:main-eq}
\cite{bernard2023structured}
Let $M=M_N$ be a sequence of random matrices  with law $\mathbb P_N$ satisfying Assumptions \ref{assumption}. Let $\Delta_1, \dots, \Delta_p$ be diagonal matrices such that $(\Delta_k)_{ii} = \psi_k(i/N)$ for some continuous functions $\psi_k$. Then, 
\begin{equation}
 \lim_{N\to\infty} N^{-1} \mathbb E\left[ \tr(M\Delta_1M\Delta_2 \cdots M\Delta_p)  \right] = \int_{[0,1]^p} T_p(x_1, \dots, x_p)\prod_{j=1}^p\mathrm dx_j \psi_j(x_j) ,
 \end{equation}
where 
\begin{equation} \label{eq:gpi-deltapi*}
 T_p(x_1, \dots, x_p) = \sum_{\pi\in \mathrm{NC}(\mathcal P_p)} g_{\pi}(x) \delta_{\pi^*}(x) ,
 \end{equation}
and $g_{\pi}(x)$ and $  \delta_{\pi^*}(x)$ are defined as in \eqref{eq:keyrelation}.
\label{claim:traces}
\end{claim}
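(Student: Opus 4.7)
The plan is to expand the trace in indices, apply the classical moment-cumulant formula, and track which partitions survive the large-$N$ limit. Writing
\begin{equation*}
\mathbb E[\tr(M\Delta_1 \cdots M\Delta_p)] = \sum_{i_1, \dots, i_p =1}^N \mathbb E[M_{i_1 i_2} M_{i_2 i_3} \cdots M_{i_p i_1}] \prod_{k=1}^p \psi_k(i_{k+1}/N),
\end{equation*}
and expanding via \eqref{eq:classicalcumulantstomoments},
\begin{equation*}
\mathbb E[M_{i_1 i_2} \cdots M_{i_p i_1}] = \sum_{\pi\in \mathcal P_p} \prod_{B\in \pi} \mathbb K\bigl[(M_{i_k, i_{k+1}})_{k\in B}\bigr],
\end{equation*}
the problem reduces to analyzing the $N$-scaling of the contribution of each partition $\pi$ to the resulting sum over indices.

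The key input is the $U(1)^N$ invariance of Assumption~(1): each cumulant factor $\mathbb K[(M_{i_k, i_{k+1}})_{k\in B}]$ vanishes unless the multigraph with edges $\{(i_k, i_{k+1}) : k\in B\}$ is balanced at every vertex (Eulerian). This forces index identifications among the $i_j$. When $\pi$ is non-crossing on the cyclically arranged edges $1, \dots, p$, the minimal such identification is precisely the Kreweras complement $\pi^*$ on the vertices, and then each block $B$ represents a single closed sub-cycle of length $|B|$ on the identified indices. Combining Assumptions~(2) and~(3), the pinched cyclic cumulant for $B$ contributes $N^{1-|B|} g_{|B|}(x_B) + o(N^{1-|B|})$, so the product of cumulants scales as $N^{|\pi|-p} g_\pi(x)$. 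The surviving free index sums are indexed by the $|\pi^*| = p+1-|\pi|$ blocks of $\pi^*$ (a standard property of the Kreweras complement on non-crossing partitions), producing $N^{p+1-|\pi|}$ factors. Together with the $N^{-1}$ prefactor, the contribution is $O(1)$ and converges, by continuity of $\psi_k$ and $g_{|B|}$ and a Riemann-sum-to-integral argument, to
\begin{equation*}
\int_{[0,1]^p} g_\pi(x) \delta_{\pi^*}(x) \prod_{k=1}^p \psi_k(x_k)\, dx_k .
\end{equation*}

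For crossing partitions $\pi$, the Eulerian closure is strictly finer than the would-be Kreweras complement. Two phenomena can occur: either the number of independent vertex indices after identification drops below $p+1-|\pi|$, costing at least one factor of $N$ in the index sum, or some block $B$ decomposes into $r_B\geq 2$ disjoint sub-cycles after identification, in which case Assumption~(4) supplies the scaling $N^{2-r_B-|B|}$ instead of $N^{1-|B|}$, again costing at least one factor of $N$. The example $\pi=\{\{1,3\},\{2,4\}\}$ for $p=4$ illustrates both mechanisms at once. In either case the total $N$-power is strictly negative, so crossing partitions do not contribute to the limit.

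I expect the main technical obstacle to be precisely this crossing-partition bookkeeping: one needs a combinatorial identity that, for an arbitrary $\pi\in\mathcal P_p$, relates the cardinality of the Eulerian vertex closure, the block-by-block sub-cycle count $(r_B)_{B\in\pi}$, and the crossing structure, so as to certify that non-crossing partitions exactly saturate the scaling $N^{|\pi^*|}\cdot N^{|\pi|-p}\cdot N^{-1}=1$ while crossing ones are strictly subdominant. Once this power counting is settled, summing the leading contributions over $\pi\in \mathrm{NC}(\mathcal P_p)$ yields \eqref{eq:gpi-deltapi*} and hence the claim.
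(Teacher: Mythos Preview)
Your proposal is correct and follows essentially the same route as the paper's own argument: expand the trace, apply the classical moment--cumulant formula, use $U(1)^N$ invariance to force Eulerian constraints on each cumulant factor, and then do a power-counting in $N$ to show that only non-crossing partitions survive, with the Kreweras complement encoding the forced vertex identifications. The paper likewise only sketches this for $p=4$ and flags the same crossing-partition bookkeeping as the remaining combinatorial work, so your identification of the ``main technical obstacle'' matches the level of rigor in the source.
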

This is obtained via the same argument as in Section \ref{sec:openQSSEP} (which we have sketched only for $p=4$). 

\begin{remark} \label{rem:freeconditioned}
This formula illustrates the fact that the notion of conditional probability, in a non-commutative setup, is a well-suited framework to study structured random matrices. 
Let $\mathcal D \subset \mathcal A$ be unital subalgebra. Following \cite{Mingo2017Free}, a \emph{conditional expectation value} is a map $E^\mathcal{D}:\mathcal A \to \mathcal D$ such that  $E^\mathcal{D}[\Delta]\in\mathcal D$ and $E^\mathcal{D}[\Delta a \Delta']=\Delta E^\mathcal{D}[a]\Delta'$ for all $a\in \mathcal A$ and $\Delta, \Delta'\in \mathcal D$. Furthermore, the \emph{operator-valued distribution} of a random variable $a\in \mathcal A$ is given by all \emph{operator-valued} moments $E^\mathcal{D}[a\Delta_1a\cdots a\Delta_{n-1}a]\in\mathcal D$ where $\Delta_1,\cdots,\Delta_{n-1}\in\mathcal D$.

In the present context, we should consider the case where $\mathcal A$ is the ensemble of $N\times N$ random matrices $M$ satisfying assumptions {\it (1)-(3)}, and $\mathcal D$ is the subalgebra of deterministic (bounded) diagonal matrices. In the large $N$ limit the algebra $\mathcal D$ is identified with $L^\infty[0,1]$. Furthermore, for $M\in \mathcal A$ we define the conditional expectation value to be 
\begin{equation*} %\label{eq:cond_expectation}
E^\mathcal{D}[M]:=\mathrm{diag}(\mathbb E[M_{11}],\cdots,\mathbb E[M_{NN}]),
\end{equation*}
that is, one takes the usual expectation value of the matrix elements and sets all non-diagonal elements to zero. Since we are only interested in this concrete example, we will always denote elements of $\mathcal A$ by $M$ in the following definitions (instead of $a$).

As in the scalar case, the operator-valued free cumulants are defined via a moment-cumulant relation  \cite{Mingo2017Free}: 
The \emph{$\mathcal D$-valued free cumulants} $\kappa^\mathcal{D}_n:\mathcal A^n\to\mathcal{D}$ are defined through the $\mathcal{D}$-valued moments by 
	\begin{equation*}
		E^\mathcal{D}[M_1\cdots M_n]=\sum_{\pi\in NC(\mathcal{P}_n)} \kappa_\pi^\mathcal{D}(M_1,\cdots,M_n),
	\end{equation*}
	where the $\kappa_\pi^\mathcal{D}$ are constructed from the family of linear functions $\kappa_n^\mathcal{D}:=\kappa_{1_n}^\mathcal{D}$ respecting the nested structure of the parts appearing in $\pi$ -- see Figure \ref{fig:nested}. 
	
	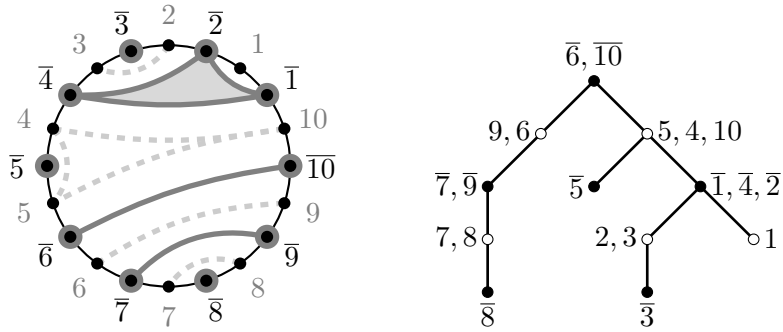
\begin{figure}
		\begin{tikzpicture}[scale=0.8]
			\draw[thick] (2,0) arc(0:360:2);
			\filldraw[line width=2pt, draw= black!50, fill=gray!30] (36:2) to[bend left] (2*36:2) to[bend left=20] (4*36:2) to[bend right=10] (36:2);
			\fill[black!50] (36:2) circle(0.2);
			\fill[black!50] (2*36:2) circle(0.2);
			\fill[black!50] (4*36:2) circle(0.2);
			\draw[line width=2pt, black!50] (-36:2) to[bend right] (-3*36:2);
			\fill[black!50] (-36:2) circle(0.2);
			\fill[black!50] (-3*36:2) circle(0.2);
			\draw[line width=2pt, black!50] (-4*36:2) to[bend left=10] (0:2);
			\fill[black!50] (-4*36:2) circle(0.2);
			\fill[black!50] (0:2) circle(0.2);
			\fill[black!50] (-2*36:2) circle(0.2);
			\fill[black!50] (3*36:2) circle(0.2);
			\fill[black!50] (5*36:2) circle(0.2);
			%	\foreach \x in {1,...,10}
			%	{\draw[line width=1pt] (\x*36+18:1.9) -- (\x*36+18:2.1) ;}
			\draw[dashed, line width=2pt, gray!40] (2*36+18:2) to[bend left=40] (3*36+18:2);
			\draw[dashed, line width=2pt, gray!40] (4*36+18:2) to[bend left=40] (5*36+18:2);
			\draw[dashed, line width=2pt, gray!40] (0*36+18:2) to[bend left=10] (4*36+18:2);
			\draw[dashed, line width=2pt, gray!40] (0*36+18:2) to[bend right=10] (5*36+18:2);
			\draw[dashed, line width=2pt, gray!40] (-1*36+18:2) to[bend right=15] (-4*36+18:2);
			\draw[dashed, line width=2pt, gray!40] (-2*36+18:2) to[bend right=40] (-3*36+18:2);
			\foreach \x in {1,...,10}
			{\fill (\x*36:2) circle(0.1);
				\draw (\x*36:2.5) node{$\overline{\x}$};}
			\foreach \x in {1,...,10}
			{\fill (\x*36+18:2) circle(0.1);
				\draw[gray] (\x*36+18:2.5) node{$\x$};}
		\end{tikzpicture}
		\hspace{0.8cm} 
		\begin{tikzpicture}[scale=0.7]
\draw[line width=1pt] (0,0) -- (-1,-1) -- (-2,-2) -- (-2,-4);
\draw[line width=1pt] (0,0) -- (1,-1) -- (2,-2) -- (3,-3);
\draw[line width=1pt]  (1,-1) -- (0,-2) ; 
\draw[line width=1pt]  (2,-2) -- (1,-3) -- (1,-4); 
\draw[fill=black, draw=black] (0,0) circle(0.1);
\draw[fill=white, draw=black] (-1,-1) circle(0.1);
\draw[fill=white, draw=black] (1,-1) circle(0.1);
\draw[fill=black, draw=black] (-2,-2) circle(0.1);
\draw[fill=black, draw=black] (0,-2) circle(0.1);
\draw[fill=black, draw=black] (2,-2) circle(0.1);
\draw[fill=white, draw=black] (-2,-3) circle(0.1);
\draw[fill=white, draw=black] (1,-3) circle(0.1);
\draw[fill=white, draw=black] (3,-3) circle(0.1);
\draw[fill=black, draw=black] (-2,-4) circle(0.1);
\draw[fill=black, draw=black] (1,-4) circle(0.1);
\draw[fill=black, draw=black] (0,0) node[above]{$\overline 6,\overline{10}$};
\draw[fill=white, draw=black] (-1,-1) node[left]{$9,6$};
\draw[fill=white, draw=black] (1,-1) node[right]{$5,4,10$};
\draw[fill=black, draw=black] (-2,-2) node[left]{$\overline 7, \overline 9$};
\draw[fill=black, draw=black] (0,-2) node[left]{$\overline 5$};
\draw[fill=black, draw=black] (2,-2) node[right]{$\overline 1, \overline 4, \overline 2$};
\draw[fill=white, draw=black] (-2,-3) node[left]{$7,8$};
\draw[fill=white, draw=black] (1,-3) node[left]{$2,3$};
\draw[fill=white, draw=black] (3,-3) node[right]{$1$};
\draw[fill=black, draw=black] (-2,-4) node[below]{$\overline 8$};
\draw[fill=black, draw=black] (1,-4) node[below]{$\overline 3$};

		\end{tikzpicture}
		\caption{Non-crossing partitions have a  nested structure: for instance in the present example, the part $\{\bar 8\}$ is nested inside the part $\{\bar 7,\bar 9\}$ which is itself nested in $\{\bar 6,\bar 10\}$. One associates to this example the planar tree shown on the right, where the black vertices correspond to the blocks of $\pi$ and the white vertices correspond to the blocks of the Kreweras complement $\pi^*$. 
		} 
		\label{fig:nested}
	\end{figure}
It turns out -- see \cite{bernard2023structured} for details -- that  the cumulants of cyclic loops $g_\pi(x)$ appearing in \eqref{eq:gpi-deltapi*} can be identified with the operator-valued free cumulants, that is:
	\begin{multline}\label{eq:op-val-cum_pi}
		\kappa_\pi^\mathcal{D}(M\Delta_1,\cdots,M\Delta_1,M\Delta_n)(x)  \\  = \int \left( \prod_{k=1}^{n-1} \mathrm dx_k\psi_k(x_k)\right)\,\psi_n(x) g_\pi(x_1,\cdots,x_{n-1},x) \delta_{\pi^*}(x_1,\cdots,x_{n-1},x), \nonumber
	\end{multline}
for $\Delta_k$ diagonal matrices with entries $(\Delta_k)_{ii}=\psi_k(i/N)$ with $\Delta_k$ bounded functions on $[0,1]$.
\end{remark}

	\subsection{Stability under non-linear transformation}
The class of matrices satisfying Assumptions \ref{assumption} is stable under certain non-linear transformations. 
\begin{claim}[\cite{bernard2023structured,bernard2025addition}]
	Let $M=M_N$ satisfy Assumptions \ref{assumption}. Then for all polynomial $P$, the sequence of random matrices $P(M)$ also satisfies Assumptions \ref{assumption}. 
\end{claim}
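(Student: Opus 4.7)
The plan is to verify each of the four items of Assumptions \ref{assumption} for $P(M)$. Item (1) is immediate since polynomial evaluation commutes with conjugation: $P(uMu^{*})=uP(M)u^{*}$, so the distribution of $P(M)$ is invariant under diagonal unitary conjugation if that of $M$ is.

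For items (2)--(4), the key observation is that writing $P(x)=\sum_{k=0}^{d}a_{k}x^{k}$ gives
\[
P(M)_{i,j}=\sum_{k=0}^{d}a_{k}\sum_{\ell_{1},\ldots,\ell_{k-1}=1}^{N}M_{i,\ell_{1}}M_{\ell_{1},\ell_{2}}\cdots M_{\ell_{k-1},j},
\]
expressing matrix elements of $P(M)$ as sums over weighted paths of length at most $d$ in $M$. Cumulants of products of $P(M)$ matrix elements then reduce, via the Leonov--Shiryaev formula for cumulants of products, to sums of cumulants of $M$ matrix elements along ``thickened'' graphs, the thickening amounting to replacing every edge of the original graph by a path of length $k_r\leq d$ whose internal vertices are summed.

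For item (2), I would analyze the cyclic cumulant $\mathbb{K}[P(M)_{i_{1},i_{2}},\ldots,P(M)_{i_{p},i_{1}}]$. After expansion, it becomes a sum over subdivision patterns $(k_{1},\ldots,k_{p})$, over intermediate indices $\ell_{r,s}$, and over partitions $\pi$ of the $K=k_{1}+\cdots+k_{p}$ resulting $M$-variables that are compatible with the product structure. Each term is a product of cumulants of $M$-entries along the disjoint subgraphs specified by $\pi$. By assumption (4) for $M$, a product of $r$ disjoint cyclic cumulants involving $n$ total variables is $O(N^{2-r-n})$, while the free summation over intermediate indices contributes at most $N^{K-p}$. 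Combining, one checks that every term is $O(N^{1-p})$ and identifies the dominant contributions as those for which the ``thickened cycle'' remains connected: the limit $g_{p}^{P(M)}(x_{1},\ldots,x_{p})$ is then an integral of finite sums of products of $g_{k}$'s of $M$ against indicator weights encoding which intermediate indices coincide with the $x_r$'s. Boundedness and continuity of $g_{p}^{P(M)}$ follow from the same properties of the $g_{k}$ together with the fact that the intermediate sums become ordinary Riemann integrals over $[0,1]$.

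Items (3) and (4) are handled by the same expansion. For (3), pinching some of the $i_j$'s only modifies the combinatorial coincidence structure of intermediate indices, and the scaling plus continuity are preserved. For (4), the cumulant of $r$ disjoint cycles of $P(M)$ expands, via the same product-cumulant formula, into cumulants of $M$-entries along a disjoint union of thickened cycles; assumption (4) for $M$, applied block-wise in the Leonov--Shiryaev sum, yields the required $O(N^{2-r-n})$ bound after accounting for the $N^{K-n}$ factor from intermediate summations. The main obstacle throughout is the combinatorial bookkeeping in items (2) and (4): one must argue that partitions producing non-Eulerian index patterns or extra connected components are subdominant, so that the interplay of ``free summation gain'' $N^{K-p}$ with ``cumulant decay'' $N^{2-r-K}$ produces exactly the claimed powers of $N$. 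Conceptually, the statement is that polynomial evaluation preserves the operator-valued free probabilistic structure of Remark \ref{rem:freeconditioned}, and the concrete proof is a careful power-counting argument built on that structure.
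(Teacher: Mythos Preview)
The paper does not actually contain a proof of this claim. It is stated with a citation to \cite{bernard2023structured,bernard2025addition}, and a few lines below Claim~\ref{claim:weirdnonlinear} the authors write explicitly that ``arguments for its stability under the above non-linear transformations were given there but, as the authors recognized, these are not enough to complete the proof of these claims.'' So there is no in-paper proof to compare against; your sketch is being measured against a result whose full proof is acknowledged to be open.

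That said, your outline is the natural one and almost certainly coincides with the strategy in the cited references: expand each entry of $P(M)$ as a sum over paths, apply multilinearity and the Leonov--Shiryaev formula, and do a power-counting over the resulting thickened graphs. Item (1) is indeed immediate, and the heuristic for item (2) is sound.

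The genuine gap, which you flag yourself as ``the main obstacle'', sits in item (4). In the Leonov--Shiryaev expansion each block $B$ of the partition $\pi$ contributes a single joint cumulant of $M$-entries whose index graph must be Eulerian to be nonzero. Such a graph decomposes into edge-disjoint cycles, but these cycles may share vertices (figure-eight configurations and worse). Assumption~(4) as stated in the paper only bounds cumulants of \emph{vertex-disjoint} cycles; Assumption~(3) covers pinching within a single cycle. Neither directly controls a joint cumulant of several cycles glued at common vertices, yet those are exactly the terms your power-counting must bound to get $O(N^{2-r-n})$. Your sentence ``assumption (4) for $M$, applied block-wise'' hides this: what you would actually need is a strengthening of (4) to arbitrary Eulerian graphs, and showing that the stated Assumptions imply such a strengthening is precisely the step the authors say is missing. (Minor aside: your phrase ``a product of $r$ disjoint cyclic cumulants involving $n$ total variables is $O(N^{2-r-n})$'' conflates a product of $r$ single-cycle cumulants, which is $O(N^{r-n})$ by (2), with a single cumulant of $r$ disjoint cycles, which is $O(N^{2-r-n})$ by (4); be careful to keep these apart.)
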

The matrix $P(M)$ has new explicit local free cumulants, which can be expressed in terms of those of $M$. As consequence of this stability property, we also obtain that $\tr(P(M))$ concentrates to its expectation in the sense of \eqref{eq:selfaveraging}, namely
\begin{equation} \label{eq:PM-self}
 \lim_{N\to\infty} N^{-1} \log \mathbb E\left[ e^{z \tr P(M)} \right] =\lim_{N\to\infty} z N^{-1}\mathbb E\left[\tr P(M)\right], 
 \end{equation}
for any polynomial $P$.

\begin{claim}[\cite{bernard2023structured,bernard2025addition}]
Let $M=M_N$ satisfy Assumptions \ref{assumption}. Assume that they are centred, $\mathbb{E}[M_{ii}]=0$. Define a matrix $Y$ with coefficients 
$$ Y_{i,j} = M_{i,j} f_{i,j}^{(N)}(N \vert M_{i,j}\vert^2) $$
where the $f^{(N)}_{i,j}$ are polynomials
$$f^{(N)}_{i,j}(u) = \sum_{k=0}^{d} a_k\left(\frac{i}{N},\frac{j}{N}\right) u^k$$  
where the functions $a_k(x,y)$ are continuous in $(x,y)$. Then the sequences of matrices $Y$ satisfies Assumptions \ref{assumption}.    \label{claim:weirdnonlinear}
\end{claim}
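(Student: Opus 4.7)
The plan is to reduce every moment and cumulant of $Y$-entries to moments and cumulants of $M$-entries by polynomial expansion, and then to apply Assumptions \ref{assumption} directly to $M$. Using $|M_{ij}|^2 = M_{ij}M_{ji}$ (since the relevant matrices in the examples are Hermitian), expansion of the degree-$d$ polynomials $f_{ij}^{(N)}$ yields
\begin{equation*}
Y_{ij} = \sum_{k=0}^d a_k(i/N, j/N)\, N^k\, M_{ij}^{k+1} M_{ji}^k,
\end{equation*}
so every cumulant of a product of $Y$-entries becomes a finite sum, indexed by tuples $(k_r)\in\{0,\ldots,d\}^q$, of cumulants of products of $M$-entries, weighted by continuous coefficients and an explicit $N^{\sum k_r}$ prefactor. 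Property (1) is then immediate: $|M_{ij}|^2$ is $U(1)^N$-invariant and $M_{ij}\mapsto e^{\I(\theta_i-\theta_j)}M_{ij}$ under a diagonal unitary, so $Y_{ij}\mapsto e^{\I(\theta_i-\theta_j)}Y_{ij}$ and $Y\stackrel{d}{=}uYu^\ast$.

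For properties (2) and (3), fix a cyclic loop $Y_{i_1,i_2}\cdots Y_{i_p,i_1}$. Each term in the polynomial expansion corresponds, graphically, to a multigraph along the cyclic vertex sequence $i_1,\ldots,i_p$, with $k_r+1$ forward and $k_r$ backward edges on each segment $(i_r,i_{r+1})$. By the moment-cumulant formula, the corresponding $M$-moment decomposes as a sum over partitions of the multi-edge set into sub-blocks, each block contributing an $M$-cumulant whose scaling is controlled by Assumptions \ref{assumption} applied to $M$. The dominant contribution comes from the \emph{ladder} partitions in which, on each segment, the $k_r$ backward edges are paired with $k_r$ of the $k_r+1$ forward edges into two-cycles, leaving a single residual forward edge per segment. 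Each such two-cycle contributes $N^{-1}g_2(x_r,x_{r+1})$ by (2) with $p=2$, exactly cancelling the $N^{k_r}$ prefactor, while the residual $p$-cycle contributes $N^{-(p-1)}g_p(x_1,\ldots,x_p)$ by (2). Summing over tuples $(k_r)$ produces a limit $g_p^Y(x_1,\ldots,x_p)$ expressible as a finite polynomial combination of the $a_k(x_r,x_{r+1})$'s and the local free cumulants $g_q$ of $M$, hence continuous and bounded as required by (2).

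Property (3) follows from the same analysis with partially coinciding indices, the pinching assumption on $M$ guaranteeing the correct pinched-ladder limit. Property (4) is proved analogously: for $r$ disjoint $Y$-cycles, the expansion produces a disconnected multigraph with $r$ components, each admitting its own ladder pairings; the ladder cancellation again absorbs all $N^{k_r}$ prefactors, and Assumption (4) on $M$ ensures that each extra disconnected component costs one additional factor of $N^{-1}$, delivering the scaling $O(N^{2-r-n})$ for $Y$.

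The principal obstacle is the uniform $N$-scaling bookkeeping: one has to verify that no non-ladder partition of the multi-edge set produces a term of the same order as the ladder partitions, that only finitely many partitions contribute at leading order, and that the resulting $g_p^Y$ depends continuously on $(x_1,\ldots,x_p)$. This amounts to bounding cumulants on arbitrary connected Eulerian multigraphs by products of cumulants on simple cycles, which can be done by iterated Möbius inversion together with (2)-(4), but the combinatorics is noticeably more involved than in the $M$ case because of the multi-edges and the $(x,y)$-dependence of the coefficients $a_k$. The centering hypothesis $\mathbb E[M_{ii}]=0$ is used to rule out spurious contributions from singleton blocks when indices are pinched, which would otherwise interfere with the cancellation of the $N^{k_r}$ factors.
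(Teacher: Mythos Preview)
The paper does not prove this claim. Immediately after stating it, the authors write that ``arguments for its stability under the above non-linear transformations were given there but, as the authors recognized, these are not enough to complete the proof of these claims.'' So there is no proof in the paper to compare against; the statement is presented as an assertion whose justification is acknowledged to be incomplete.

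Your plan is the natural one and is presumably what the cited references attempt. The obstacle you flag at the end is real and is exactly the gap the paper concedes. After expanding each $Y$-entry, the $M$-cumulants that arise via the Leonov--Shiryaev formula are indexed by \emph{connected Eulerian multigraphs} on the vertex set $\{i_1,\ldots,i_p\}$, with multi-edges and cycles sharing vertices. Assumptions~\ref{assumption} control the scaling only when the underlying graph is a single simple cycle (properties (2)--(3)) or a disjoint union of simple cycles (property (4)); they say nothing about genuine multigraphs. Your claim that the required bounds ``can be done by iterated M\"obius inversion together with (2)--(4)'' is precisely the step that does not go through: M\"obius inversion reshuffles cumulants and moments but cannot manufacture a scaling bound that was never assumed. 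Without an additional hypothesis controlling cumulants on general Eulerian multigraphs, the dominance of the ladder partitions over all others cannot be established, and properties (2)--(4) for $Y$ remain open.

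Two smaller points. The identity $|M_{ij}|^2=M_{ij}M_{ji}$ you invoke requires $M$ to be Hermitian, which is not part of Assumptions~\ref{assumption} (it does hold in the QSSEP example). And the role you assign to the centring hypothesis is vague: even with $\mathbb{E}[M_{ii}]=0$, the pinching analysis in (3) is delicate, because coinciding indices again create multigraphs outside the reach of the stated assumptions.
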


The property {\it (4)} in Assumptions  \ref{assumption} was reinforced in \cite{bernard2025addition}, and arguments for its stability under the above non-linear transformations were given there but, as the authors recognized, these are not enough to complete the proof of these claims.
The type of entrywise  non-linear transformations considered in Claim \ref{claim:weirdnonlinear} is a type of transformations considered in machine learning  \cite{Speicher2023RMML, speicher2024entrywise}.

Let us now mention briefly two applications of the previous claims, one in random matrix theory and another in quantum stochastic processes.	

	\subsection{Application: Spectrum of sub-matrices} 
	Let $M=M_N$ be (a sequence of) random matrices satisfying Assumptions \ref{assumption} and consider $M_{red}$ a $\ell N\times\ell N$ sub-matrix of $M$ (we take the same subset of rows and columns, this is sometimes called a principal submatrix). What can be said about the law of $M_{red}$? If $M$ is a Wigner matrix, it is clear that $M_{red}$ is still a Wigner matrix. Let us consider the slightly more involved case of a Haar-distributed orbit $M=UDU^*$. How are sub-matrices distributed? Asymptotically, they are distributed as a Haar orbit with spectral measure $\mu_D^{(\ell)}$, the free compression of $\mu_D$ obtained  by scaling all the free cumulants of $\mu_D$ by $1/\ell$ (see more details in Example \ref{ex:freecompression}). Obtaining this answer using the present framework is not more difficult than computing the spectrum of the full matrix, thanks  to the stability of Assumptions \ref{assumption} under left/right multiplication by diagonal matrices. More generally, we have the following.

\begin{claim}[\cite{bernard2023structured}] \label{claim:subblock} 
Let $M=M_N$  satisfy Assumptions \ref{assumption}. 	Let $H=H_N$ be a sequence of diagonal matrices such that $H_{ii} = h(i/N)$ where $h$ is a continuous function,  and set $M_h=H^{1/2}M H^{1/2}$. Then, the spectrum of   $M_h$ is characterized by  the formal power series in $1/z$,
$$ \mathcal F(h; z) = \lim_{N\to\infty} \frac{1}{N}\mathbb E  \left[ \tr(\log(1-z^{-1}M_h))\right],$$
which is given explicitly  by 
\begin{equation}
	\mathcal F(h; z) =\int_0^1 \mathrm dx\, \left[ \log\left(1-z^{-1}h(x)b(x)\right) + a(x)b(x)\right] - \mathcal F_0(a) ,
	\label{eq:extremization}
\end{equation} 
where $a(x) $ and $b(x)$ solve
\begin{equation}
	a(x)  = \frac{h(x)}{z-h(x)b(x)}, \;\; b(x) = \frac{\delta \mathcal F_0(a)}{\delta a(x)}, 
	\label{eq:saddle}
\end{equation}
and 
$$ \mathcal F_0(a) := \sum_{p\geq 1 } \frac{1}{p}\int_{[0,1]^p} g_p(x_1, \dots, x_p)\prod_{j=1}^p\mathrm dx_j \, a(x_j).$$
The relations \eqref{eq:saddle} are the extremization condition for the l.h.s. of \eqref{eq:extremization} viewed as functional of $a$ and $b$. 
\label{claim:extremum}
\end{claim}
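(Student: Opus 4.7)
The plan is to reduce the claim to a combinatorial identity about non-crossing partitions by expanding $\mathcal F(h;z)$ as a formal power series in $1/z$, and then to recognize the variational principle \eqref{eq:extremization}--\eqref{eq:saddle} as a Hubbard--Stratonovich style repackaging of that series.

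First, I would use the self-averaging property \eqref{eq:PM-self} applied to $P(M) = (H^{1/2} M H^{1/2})^p$ to argue that $\mathcal F(h;z)$ is well defined as a formal power series in $1/z$. Expanding the logarithm and using cyclicity of the trace gives $\tr(M_h^p) = \tr((MH)^p)$. Applying Claim \ref{claim:main-eq} with $\Delta_k = H$ (so $\psi_k = h$) together with the explicit form \eqref{eq:gpi-deltapi*} of $T_p$ then yields
\begin{equation} \label{eq:Fh-expand}
\mathcal F(h;z) = -\sum_{p\geq 1}\frac{1}{p z^p} \sum_{\pi \in \mathrm{NC}(\mathcal P_p)} \int_{[0,1]^p} g_\pi(x)\,\delta_{\pi^*}(x) \prod_{j=1}^p h(x_j)\, dx_j.
\end{equation}
Each delta $\delta_{\pi^*}$ collapses the variables of every block $B \in \pi^*$ to a common value $y_B$, so the integrand factorizes along the planar tree encoding $(\pi, \pi^*)$ (cf.\ Figure \ref{fig:nested}): each block $B \in \pi^*$ contributes $h(y_B)^{|B|}$, and each block $C \in \pi$ contributes $g_{|C|}$ evaluated at the $y_B$ of its $\pi^*$-neighbours in the tree.

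The next step is to recover the variational formula by introducing two dual fields $a(x), b(x)$ that decouple the sum over $\pi$ from the sum over $\pi^*$. The generating identities
$$-\int_0^1 \log\!\bigl(1 - z^{-1} h(x) b(x)\bigr)\,dx = \sum_{k\geq 1} \frac{1}{k z^k}\int_0^1 h(x)^k b(x)^k\, dx$$
together with the definition of $\mathcal F_0(a)$ show that $a$ should be viewed as inserted at the $\pi$-vertices, $b$ at the $\pi^*$-vertices, and that the bilinear coupling $\int a(x) b(x)\, dx$ plays the role of the propagator joining each $\pi$-block to its $\pi^*$-neighbours in the planar tree. The symmetry factors $1/p$ in $\mathcal F(h;z)$ and $\mathcal F_0(a)$, together with the $1/k$ coming from the logarithm, will match exactly the overcounting produced by the block labellings. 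Extremizing $F[a,b] = \int[\log(1 - z^{-1}hb) + ab]\,dx - \mathcal F_0(a)$ then amounts to summing over connected planar trees with alternating black ($\pi$-blocks) and white ($\pi^*$-blocks) vertices. Since such trees are in bijection with pairs $(\pi, \pi^*)$ of a non-crossing partition and its Kreweras complement, and since the stationarity equations $\delta_b F = 0$ and $\delta_a F = 0$ reproduce precisely \eqref{eq:saddle}, the extremum value should coincide with \eqref{eq:Fh-expand}. A direct expansion at orders $1/z$ and $1/z^2$ confirms the matching.

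The hard part will be the combinatorial bookkeeping in this last step: one must verify that the planar-tree expansion of the saddle value contains exactly one copy of each pair $(\pi, \pi^*)$ with the correct weight, without spurious crossing contributions. Planarity should follow from the acyclic (tree) structure of the saddle-point recursion \eqref{eq:saddle}, while the matching of symmetry factors reduces to the standard fact that Kreweras complementation is an involution on non-crossing partitions compatible with the nested-block decomposition illustrated in Figure \ref{fig:nested}. A clean way to close this gap would be an induction on $p$, splitting off the outer $\pi$-block containing the index $1$ and matching both sides term by term using the saddle-point recursion.
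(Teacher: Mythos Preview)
Your approach is correct and is essentially one of the routes the paper alludes to: both rely on the planar-tree structure of non-crossing partitions and their Kreweras complements (Figure~\ref{fig:nested}) to match the expansion \eqref{eq:Fh-expand} with the extremal value of \eqref{eq:extremization}. The paper, however, singles out as the ``most conceptual'' proof a different packaging of the same combinatorics: rather than a Hubbard--Stratonovich decoupling followed by an explicit tree summation, one identifies the local free cumulants $g_p$ with the operator-valued free cumulants $\kappa^{\mathcal D}_p$ of Remark~\ref{rem:freeconditioned}, and then recognizes the saddle equations \eqref{eq:saddle} as nothing but the standard relation between the operator-valued Cauchy transform and the $R$-transform in free probability with amalgamation over $\mathcal D\simeq L^\infty[0,1]$ \cite[Chap.~9, Theorem~11]{Mingo2017Free}. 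That route absorbs all of the symmetry-factor bookkeeping you flag as the ``hard part'' into an existing theorem, so no induction on $p$ is needed. Your combinatorial version is more self-contained and makes the tree mechanism visible; the operator-valued version is shorter once one accepts the machinery. One small correction: Kreweras complementation is not an involution on $\mathrm{NC}(\mathcal P_p)$ (its square is the cyclic rotation by one step), though this does not affect the nested-block decomposition you actually use.
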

The functional derivative in \eqref{eq:saddle} is given by 
$$ \frac{\delta \mathcal F_0(a)}{\delta a(x)} = \sum_{p\geq 1 } \frac{1}{p} \sum_{j=1}^p \int_{[0,1]^{p-1}} g_p(x_1, \dots ,  x_p)\Big\vert_{x_j=x}\prod_{i\neq j} \mathrm dx_i \, a(x_i).$$

Different proofs of this claim were presented in \cite{bernard2023structured}.  The most conceptual proof is probably the one using conditional non-commutative probability and the relation between the cyclic loop cumulants and operator valued free cumulants. The relations \eqref{eq:saddle} are then equivalent to the relation between the operator valued Cauchy and $R$-transforms in free probability  \cite[Chap. 9, Theorem 11]{Mingo2017Free}.  All these proofs rely on the tree structure underlying non-crossing probability, which is induced by the nested structure of non-crossing partitions as in Figure \ref{fig:Kreweras}. In the case of Wigner matrices with variance profile, the equation for the eigenvalue density in \cite{casati1993generalized, shlyakhtenko1996random} are equivalent to \eqref{eq:saddle} with only $g_1$ and $g_2$  non vanishing. 

\begin{remark} The large $N$ limit of the spectrum of $M_h$ is determined by the limit of its Stieltjes transform $G(z):=\lim_{N\to\infty }N^{-1} \tr\left(\frac{1}{z-M_h}\right)$. Under Assumptions \ref{assumption}, using \eqref{eq:PM-self}, the Stieltjes transform is equal to its mean in the large $N$ limit.
By Claim \ref{claim:subblock}, the later is simply obtained by taking a derivative with respect to $z$ in the solution \eqref{eq:extremization}, i.e. 
$$ 
G(z) = \int_0^1\! \frac{dx}{z-h(x)b(x)} 
$$
where $b$ is given by \eqref{eq:saddle}. In order to get information on the sub-matrix of $M$ made by selecting the row/columns corresponding to an interval $I\subset[0,1]$, it suffices to apply the formula above with  $h(x)=\mathds{1}_{x\in I}$. 
\end{remark}

\begin{example}	\label{ex:freecompression}
	As an illustration let us consider the spectrum of sub-matrices of Haar distributed orbits. let $M=U^*DU$ with $D$ diagonal and $U$ Haar distributed unitary. In that case $g_p=\kappa_p(\mu_D)$,  the free cumulants of the spectral measure of $D$ (they are $x$-independent since Haar orbits are unstructured). Then, $b_\ell$ is $x$-independent and $a_\ell(x)=0$ for $x\not\in I$ (of course both $a$ and $b$ depend on $\ell$). Equation \eqref{eq:saddle} then becomes 
\begin{equation}
	A_\ell=\frac{\ell}{z-b_\ell}, \;\; b_\ell = \sum_{k\geq 1} A_\ell^{k-1}\kappa_k(\mu_D),
	\label{eq:saddlepointspecialcase}
\end{equation}
with $A_\ell=\int_{I}\!\mathrm dx\, a_\ell(x)$. Recall now that, given a measure $\mu$, the generating function of its free cumulants, $$K(z):=\frac{1}{z} + \sum_{k\geq 1}\kappa_k(\mu)z^{k-1}$$ is the inverse of its Cauchy (or Stieltjes)  transform $G(z)$, i.e. $K(G(z))=z$. 
Let $K_1$ be the free cumulant generating function of $\mu^D$, i.e. $K_1(z)=\frac{1}{z} +\sum_{k\geq 1}\kappa_k(\mu^D)z^{k-1}$ and let $K_\ell$ be that of its free compression of $\mu_D$ (obtained by replacing $\kappa_k(\mu^D)$ by $\kappa_k(\mu^D)/\ell$). Then $\ell K_\ell(z)= K_1(z)-(1-\ell)/z$. Similarly let $G_1$ (resp. $G_\ell$) be the Cauchy transform of $\mu^D$ (resp. of its free dilatation). 

The equations \eqref{eq:saddlepointspecialcase} can be rewritten, using the definition of free compression,  as 
$$
K_\ell(A_\ell) = z/\ell \text{ that is equivalent to }  A_\ell= G_\ell(z/\ell).
$$
Since, from Claim \ref{claim:subblock}, the Cauchy transform of the subblock is $\frac{\ell}{z-b_\ell}=A_\ell$, this shows that the Cauchy transform of the sub-matrix is $G_\ell(z/\ell)$, so that the spectrum of the sub-matrix is the free compression of $\mu^D$. 
\end{example}

\subsection{Application: Almost sure classicality of quantum transport} \label{sec:application-self-averaging}
As we have mentioned in \eqref{eq:applicationWick}, the multi-point correlation functions of QSSEP  are given by the determinant of the random matrix $G$. This allows to compute the Laplace transform of the density profile for any realization of the noise as a determinant (see \eqref{eq:rho-on-O})
$$ \Tr\left( \rho_t e^{\sum_{j=1}^N h_j \hat n_j } \right)=\det\left[I + G(e^H-I)\right] , $$ 
where $H$ is the diagonal matrix with diagonal elements $h_i$. By taking the logarithm, we define 
\begin{equation}
\mathcal{F}_Q^N(h) = \frac{1}{N} \log \Tr\left( \rho_t e^{\sum_{j=1}^N h_j \hat n_j } \right) =  \frac{1}{N}  \tr \log \left( I + G(e^H-I) \right) .
%\approx \mathcal{F}_Q(h),
	\label{eq:correlationbytracelog}
	\end{equation}
The matrix $\log\left(I + G(e^H-I)\right)$ should be viewed as a formal power series in $G$ (mathematically, one should impose some hypotheses on the norm of $G$ to make all series converge). The statement \eqref{eq:PM-self} suggests that $\mathcal{F}_Q^N$ is asymptotically deterministic,  hence equal to its mean.

Furthermore, thanks to the correspondence \eqref{eq:functionalQSSEPSSEP} between the classical SSEP and averaged QSSEP, we have
\begin{equation*}
\mathcal{F}_\mathrm{ssep}(h) = 	\lim_{N\to\infty } \frac{1}{N} \log \mathbb{E} \left[ \Tr\left( \rho_t e^{\sum_{j=1}^N h_j \hat n_j } \right)\right].
\end{equation*}
Hence using \eqref{eq:PM-self} with $P(G)=\log(I+ G(e^H-I))$ -- extrapolating the result from polynomials to power series, which would require a mathematical argument -- we obtain 
\begin{equation*}
\mathcal{F}_\mathrm{ssep}(h) =  \lim_{N\to\infty }  \mathbb E\left[ \mathcal{F}_Q^N(h)  \right].
\end{equation*}
 This limit can be computed via \eqref{eq:correlationbytracelog} using Claim \ref{claim:subblock},  reducing it to a variational problem. (The similarity between the variational problem in Claim \ref{claim:subblock} and that in \eqref{eq:variationalpb} should be striking).

Using again that $\mathcal{F}_Q^N(h)$ is asymptotically deterministic, we learn 
\begin{equation*}
 \lim_{N\to\infty } \mathcal{F}_Q^N(h) = \mathcal{F}_\mathrm{ssep}(h).
 \label{eq:classicality}
\end{equation*}
This is one form of the \emph{classicality} of quantum transport in large diffusive systems, as  mentioned in Remark \ref{remark:classicality}. This argument is not yet fully rigorous: the steps described above need to be better justified. But the alternative combinatorial approach below yields strong hints of the validity of \eqref{eq:classicality}, hence indirectly that the matrix $G$ of stationary QSSEP satisfies property \textit{(4)} of Assumptions \ref{assumption}.

\subsection{Back to classical SSEP}
Since the occupation numbers in SSEP are Bernouilli variables, $n_j^2=n_j$, their cumulant generating function
$$\log \mathbf E_{\rm ssep}\left[ e^{\sum_{j=1}^N h_j n_j} \right] $$
is fully determined by the multiple cumulants at non-coinciding points, that is, by the set of cumulants $\mathbf{K}(n_{j_1},\cdots,n_{j_p})$ with all indices $j_k$ distincts. Expressing this generating function in terms of those cumulants is an intricate combinatorial problem, solved in  \cite{bauer2022bernoulli}. If the multiple cumulants at non-coinciding points scale appropriately with $N$, its solution takes a variational form in the large $N$ limit. Furthermore, the correspondence \eqref{eq:functionalQSSEPSSEP} between the classical SSEP and QSSEP, in the form \eqref{eq:applicationWick}, allows to express these cumulants in terms of the QSSEP local free cumulants. This leads to the variational problem \eqref{eq:variationalpb} connecting $\mathcal{F}_\mathrm{ssep}$ to free probability as announced in the Introduction.

	\subsection*{Acknowledgments}
	These notes were written on the occasion of a series of lectures given by D.B. in the spring 2025 at the Center for Mathematical Sciences and their Applications, Harvard University. We thank the CMSA for hospitality. G. Barraquand was supported by ANR grants ANR-21-CE40-0019 and ANR-23-ERCB-0007. D. Bernard was supported via ANR project ESQuisses under contract number ANR-20-CE47-0014-01.

	\printbibliography
	
	%\bibliographystyle{amsalpha}
	%\bibliography {QSSEP_notes_Harvard.bib}
	%\bibliography{bibliography}
	
	\end{document}